\documentclass{amsart}


\newtheorem{theorem}{Theorem}[section]
\newtheorem{corollary}{Corollary}[section]

\newtheorem{proposition}{Proposition}[section]
\newtheorem{lemma}{Lemma}[section]
\theoremstyle{definition}
\newtheorem{definition}{Definition}[section]

\newcommand{\la}{\left\langle}
\newcommand{\ra}{\right\rangle}
\newcommand{\lp}{\left(}
\newcommand{\rp}{\right)}

\newcommand{\eival}{\varepsilon}

\newcommand{\nto}{\stackrel{n}{\longrightarrow}}
\newcommand{\cD}{\mathcal{D}}
\newcommand{\cDz}{{\cD_0}}

\newcommand{\cR}{\mathcal{R}}

\newcommand{\cH}{\mathcal{H}}
\newcommand{\C}{\mathbb{C}}
\newcommand{\R}{\mathbb{R}}

\newcommand{\N}{\mathbb{N}}
\newcommand{\Nz}{{\N_0}}

\newcommand{\txi}{\tilde{\xi}}
\newcommand{\teta}{\tilde{\eta}}
\newcommand{\tf}{\tilde{f}}

\newcommand{\heta}{\hat{\eta}}
\newcommand{\hxi}{\hat{\xi}}

\newcommand{\Ilam}{{I_\lambda}}

\newcommand{\Clam}{C^\lambda}
\newcommand{\Clamof}[1]{C_{\lambda,#1}}
\newcommand{\Klam}{{K_\lambda}}
\newcommand{\etalam}{\eta_\lambda}
\newcommand{\Wlam}{W_\lambda}
\newcommand{\taulam}{{\tau_\lambda}}

\newcommand{\ttau}{\tilde{\tau}}
\newcommand{\ttaulam}{\ttau_\lambda}

\newcommand{\tauz}{\tau_\emptyset}

\newcommand{\pilamof}[1]{\pi_{\lambda,#1}}

\newcommand{\taueta}{\tau_\eta}
\newcommand{\tauxi}{\tau_\xi}
\newcommand{\Hlamof}[1]{H_{\lambda,#1}}

\newcommand{\hpsi}{\hat{\psi}}

\newcommand{\alxiet}{\alpha_{\xi,\eta} }
\newcommand{\beetxi}{\beta_{\eta,\xi} }
\newcommand{\Axiet}{A_{\xi,\eta}}
\newcommand{\Betxi}{B_{\eta,\xi} }
\newcommand{\axiet}{a_{\xi,\eta}}
\newcommand{\betxi}{b_{\eta,\xi} }

\newcommand{\bg}{\bar{g}}

\newcommand{\rL}{\operatorname{\mathrm{L}}}
\newcommand{\Ann}{\operatorname{Ann}}

\newcommand{\WR}{\operatorname{Wr}}
\DeclareMathOperator*{\bchi}{\operatorname{\chi}}
\DeclareMathOperator*{\lspan}{\operatorname{span}}
\DeclareMathOperator*{\Ran}{\operatorname{Ran}}
\DeclareMathOperator*{\Dom}{\operatorname{Dom}}

\newcommand{\WRof}[1]{\WR\!\lp#1\rp}
\newcommand{\chiof}[1]{\bchi\!\lp#1\rp}

\newcommand{\lami}[1]{\lambda^{(#1)}}

\newcommand{\ACloc}{\operatorname{AC}_{\mathrm{loc}}}
\newcommand{\Cinf}{\mathcal{C}^{\infty}}

\title{Spectral Theory of Exceptional Hermite Polynomials}
\author{David G\'omez-Ullate}
\address{Escuela Superior de Ingenier\'ia, U.  C\'adiz, 11519 Puerto Real,
  Spain, \and
  Departamento de F\'isica Te\'orica, U. Complutense, 28040 Madrid, Spain}
\email{david.gomezullate@uca.es}

\author{ Yves Grandati}
\address{ L. Physique et Chimie Th\'eoriques, U. de
 Lorraine, 57078 Metz, Cedex 3, France}
\email{yves.grandati@univ-lorraine.fr}

\author{Robert Milson}
\address{Dept. of Mathematics and Statistics, Dalhousie U.,
  Halifax NS, Canada B3H 3J5} \email{rmilson@dal.ca}

\begin{document}
\maketitle

\begin{abstract}
  In this paper we revisit exceptional Hermite polynomials from the
  point of view of spectral theory, following the work initiated by
  Lance Littlejohn. Adapting a result of Deift, we provide an
  alternative proof of the completeness of these polynomial
  families. In addition, using equivalence of Hermite Wronskians we
  characterize the possible gap sets for the class of exceptional
  Hermite polynomials.
\end{abstract}
\section{Introduction}

Consider a Sturm-Liouville problem (SLP) on
$(-\infty,\infty)$:
\begin{equation}
  \label{eq:WRSL}
 - ( W y')' - R y = \eival W y,
\end{equation}
where $W,R = O(e^{-x^2})$ as $|x| \to \infty$.  Remarkably, there
exist a large class of such eigenvalue problems with polynomial
eigenfunctions.  Classical Hermite polynomials, which correspond to
the case of $W=e^{-x^2}, R=0$, are just one, very particular, example.
Families of polynomials that arise as eigenfunctions of such SLP are
called exceptional Hermite orthogonal polynomials.


Being the eigenfunctions of an SLP, exceptional Hermite polynomials
are complete families of orthogonal polynomials with weight
$W(x)>0,\; x\in (-\infty,\infty)$ \cite{GKM09,GGM14}. However, unlike
the classical Hermite polynomials, the degree sequence of an
exceptional family has a finite number of gaps in the degree sequence;
there is a finite number of so-called exceptional degrees for which
there is no corresponding eigenpolynomial.

The well-known Bochner theorem \cite{Bo29} asserts that if
$y_n(x),\; n=0,1,2,\ldots$ with $\deg y_n =n$ is a family of
polynomials that satisfies a second-order eigenvalue equation
\[ p y_n'' + q y_n' + r y_n = \eival_n y,\] then necessarily
$p(x),q(x)$ are polynomials with $\deg p\le 2, \deg q \le 1$ and $r$
is a constant.  Multiplying \eqref{eq:WRSL} by $-W^{-1}$ gives
\begin{equation}
  \label{eq:tauWR}
  y''+q y' + r y   = -\eival y
\end{equation}
where
\[ q = W^{-1} W', \quad r = W^{-1} R. \] In order for \eqref{eq:tauWR}
to have infinitely many polynomial eigenfunctions, it is necessary for
$q(x),r(x)$ to be rational functions with $\deg q\le 1$ and
$\deg r \le 0$.  To obtain non-classical polynomials, it is necessary
for $q$ and $r$ to have poles.  Bochner's theorem then implies that
such a non-classical family of polynomial eigenfunctions must have
gaps in the degree sequence.

If we assume that there is at most a finite number of such gaps, then
it is possible to show (see \cite{GFGM19,GGM14}) that, necessarily,
\eqref{eq:tauWR} takes the form
\begin{equation}
  \label{eq:tauform}
  y''-2\lp x+\frac{\eta'}{\eta}\rp y'+ \lp \frac{\eta''}{\eta}+ 2 x
  \frac{\eta'}{\eta}\rp  y = \eival y,
\end{equation}
where $\eta(x)$ is a real polynomial without any real zeros .  Not
every choice of $\eta$ results in an eigenvalue relation
\eqref{eq:tauform} with polynomial eigenfunctions.  When it does,
however, one can show that there are precisely $\deg \eta$
``exceptional'' degrees.


Some examples of exceptional polynomials were investigated back in the
early 90s \cite{DEK94} but their systematic study started about 10
years ago, where a full classification was given for codimension one
\cite{GKM09}. The role of Darboux transformations in the construction
process was quickly recognized \cite{GKM10,Qu09,STZ10} and the next
conceptual step involved the generation of exceptional families by
multiple-step or higher order Darboux transformations \cite{GKM12}.

Exceptional polynomials appear in mathematical physics as bound states
of exactly solvable rational extensions \cite{GGM14,OS09,Qu09} and
exact solutions to Dirac's equation \cite{SH14}. They appear also in
connection with super-integrable systems \cite{PTV12,MQ13} and
finite-gap potentials \cite{HV10}.  From a mathematical point of view,
the main results are concerned with the full classification of
exceptional polynomials \cite{GFGM19}, properties of their zeros
\cite{GMM13,Ho15,KM15}, and recurrence relations \cite{Du15,GKKM16,
  KM20}.

Lance Littlejohn and collaborators wrote a series of papers analyzing
the spectral-theoretic properties of 1-step exceptional operators
\cite{LL15,LL16}.  It is the ambition of the present work to extend
this type of analysis to the class of multi-step exceptional Hermite
operators.  There are two primary results. First, we merge the
approach of Littlejohn with spectral-theoretic characterization of
Darboux transformations obtained by Deift \cite{deift} (see
\cite{GP96} for a further extension to the double-commutator method)
to provide a novel demonstration of the completeness of the
exceptional polynomials. Second, we characterize the possible gap sets
of families of  exceptional Hermite polynomial  in terms the
corresponding partition.


\section{Some spectral theory}
Let $\cR\subset \R[x]$ be the set of real-valued polynomials that have
no real zeros.  Consider a SLP
on
$(-\infty,\infty)$ of the form
\begin{equation}
  \label{eq:WetaSL}
 - ( W_\eta y')' - R_\eta y = \eival W_\eta y , \qquad \eta \in \cR,
\end{equation}
where
\begin{align}
  \label{eq:Weta}
  W_\eta
  &:= \eta^{-2}e^{-x^2},\\
  \label{eq:Reta}
  R_\eta
  &:= \eta^{-3}(\eta''+ 2 x\eta')e^{-x^2}
\end{align}
In this section we will consider Darboux transformations of 
self-adjoint operators corresponding to SLP belonging to the class
shown in \eqref{eq:WetaSL}.

Fix $\xi,\eta\in \cR$ and set
\begin{align}
  \label{eq:teta}
  \teta&:=  \eta e^{x^2},\\
  \label{eq:heta}
  \heta&:= \eta^{-1} e^{-\frac{x^2}2} ,
\end{align}
with $\txi, \hxi$ defined analogously.

Next, introduce the bilinear differential expression
\begin{equation}
  \label{eq:chider}
  \chiof{f,g}
  = f g''-2f'g'+f''g- 2x (fg'-f'g).
\end{equation}
and define the following first, and second order
differential expressions
\begin{align}
  \label{eq:alphadef}
  \alxiet y
  & =\eta^{-1}\WR(\xi,y)\\
  \label{eq:betadef}
  \beetxi 
  &= \alpha_{\teta,\txi}\\
  \label{eq:taudef}
  \taueta y
  &= \eta^{-1} \chiof{\eta,y}.
\end{align}
with $\tauxi$ defined analogously.  Note that $\taueta$ is
precisely the second-order differential expression in the left-side of
\eqref{eq:tauform}.

A direct calculation shows that
\begin{equation}
  \label{eq:albefadj}
  (\alxiet f) g W_\xi +  f
  (\beetxi g) W_\eta=  (\heta f\,  \hxi g)',
\end{equation}
with $W_\eta,W_\xi$ as defined in \eqref{eq:Weta}.  Consequently,
$\alxiet$ and $-\beetxi$ are formally adjoint with respect to weights
$W_\xi, W_\eta$.  To be precise, if $I\subset \R$ is a compact
interval, and $f,g$ sufficiently smooth functions defined on $I$, we
have
\begin{equation}
  \label{eq:albefadj2}
  \int_I  (\alxiet f) \bg\, W_\xi +
  \int_I  f  (\beetxi \bg), W_\eta  = \heta f\, \hxi \bg \Big|_I 
\end{equation}

Let us also recall Lagrange's identity
\[ (\taueta f) g \, W_\eta -  f(\taueta g)\, W_\eta  =
  W_\eta \WRof{f,g}.\]
Consequently, $\taueta$ is symmetric with respect to $W_\eta$ in the
sense that
\begin{equation}
  \label{eq:tausym}
  \int_I  (\taueta f) \bg W_\eta -
  \int_I  f (\taueta \bg)\, W_\eta  =  W_\eta \WRof{f,\bg} \Big|_I,
\end{equation}
An analogous relation holds for $\tauxi$ and $W_\xi$.

Let $(\cH_\eta,\la \cdot,\cdot \ra_\eta)=\rL^2(\R,W_\eta)$ be the
Hilbert space of square-integrable complex-valued functions defined on
$(-\infty,\infty)$, with $\cH_\xi$ defined analogously. Let
$\cDz$ denote the vector space of smooth, functions
with compact support, and let
\begin{equation}
  \label{eq:cDq}
  \cD_\xi := \{ f\in  \Cinf(\R) \colon  \WRof{f,\xi}(x)
  \equiv 0 \text{ for  $|x|$  sufficiently large} \}.
\end{equation}
In other words, an element of $\cD_\xi$ behaves like a multiple of
$\xi(x)$ outside a compact interval, but with the constant of
proportionality for large $x$ not necessarily equal to the constant of
proportionality for small $x$.  It is well known that $\cD_0$ is dense
in $\cH_\eta$ and $\cH_\xi$. Since elements of $\cD_\xi$ are square
integrable with respect to $W_\eta$ and since $\cD_0 \subset \cD_\xi$,
the latter is also a dense subspace.
 
Let $\axiet\colon \cH_\eta\to \cH_\xi,\;\betxi\colon
\cH_\xi \to \cH_\eta$
denote  densely defined first-order operators with 
action $\alxiet, \beetxi$, respectively, and with
\[ \Dom\axiet = \cD_\xi,\quad \Dom\betxi = \cD_0.\] Since
$\alxiet\xi\equiv0$, it follows that $\Ran\axiet = \cD_0$.  For
same reason and by \eqref{eq:albefadj2}, we have
$\Ran\betxi \subset \Ann_\eta\xi$, where
\begin{equation}
  \label{eq:Anndef}
 \Ann_\eta\xi = \{ f\in \cH_\eta \colon \la f, \xi \ra_\eta = 0.\}.
\end{equation}
Next, we
strengthen these assertions as follows.

\begin{proposition}
  \label{prop:RanA0}
  We have $\overline{\Ran \axiet} = \cH_\xi$.
\end{proposition}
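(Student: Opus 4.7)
The plan is to reduce the proposition to the already-noted density of $\cDz$ in $\cH_\xi$ by establishing the identity $\Ran\axiet = \cDz$. The inclusion $\Ran\axiet \subseteq \cDz$ is immediate: for $f \in \cD_\xi$ the Wronskian $\WRof{\xi,f}$ vanishes outside a compact set, and since $\eta \in \cR$ has no real zeros, $\eta^{-1}$ is smooth on all of $\R$; thus $\alxiet f = \eta^{-1}\WRof{\xi,f}$ is smooth and compactly supported.

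For the reverse inclusion I would solve the first-order inhomogeneous equation $\axiet f = g$ explicitly for an arbitrary $g \in \cDz$. Writing the equation as $\xi f' - \xi' f = \eta g$ and dividing by $\xi^2$ yields $(f/\xi)' = \eta g / \xi^2$; since $g$ has compact support and $\xi$ is nonvanishing, the right side is smooth and compactly supported, so the antiderivative
\[ u(x) := \int_{-\infty}^x \frac{\eta(t) g(t)}{\xi(t)^2}\, dt \]
is smooth on $\R$ and constant on each tail (with possibly different constants as $x \to \pm\infty$). Setting $f := \xi u$ produces a smooth function which agrees with a constant multiple of $\xi$ outside the support of $g$, so $\WRof{\xi,f}$ vanishes there and $f \in \cD_\xi = \Dom\axiet$. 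A direct computation, using $\WRof{\xi,\xi u} = \xi^2 u'$, then confirms $\alxiet f = g$.

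Combining the two inclusions yields $\Ran\axiet = \cDz$, and the already-cited density of $\cDz$ in $\cH_\xi = \rL^2(\R,W_\xi)$ gives $\overline{\Ran\axiet} = \cH_\xi$. The main conceptual point is that the freedom coming from $\alxiet \xi = 0$ lets us choose the antiderivative $u$ so that $f = \xi u$ lands inside $\cD_\xi$ rather than merely inside $\Cinf(\R)$; I anticipate no serious technical obstacle beyond this observation.
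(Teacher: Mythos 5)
Your proposal is correct and follows essentially the same route as the paper: the paper likewise reduces to showing $\cDz\subset\Ran\axiet$ by density and constructs the preimage $f=\xi\int_{-\infty}^x h\eta\xi^{-2}$, verifying $f\in\cD_\xi$ from the vanishing of the integrand on the tails. Your additional remark that $\Ran\axiet\subseteq\cDz$ matches the assertion the paper makes in the text immediately preceding the proposition, though it is not needed for the closure statement itself.
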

\begin{proof}
  Since $\cD_0$ is dense in $\cH_\xi$, it suffices to show that
  $\cD_0 \subset \Ran \axiet$.  Let $h\in \cD_0$ be given.  Set
  \[ f(x) = \xi(x) \int_{-\infty}^x h\eta\xi^{-2},\quad x\in\R.\]
  Since $h\in \cD_0$, for $x$ sufficiently small, we have $f(x)=0$.
  For $x$ sufficiently large, we have $f(x) = C \xi(x)$ where
  $C=\int_\R h\eta\xi^{-2}.$ Hence, $f\in \cD_\xi$.  By
  \eqref{eq:alphadef} we have,
  \[ \alxiet f = \eta^{-1} \xi^2 \lp \xi^{-1} f\rp '=    h.\]
\end{proof}

\begin{proposition}
  \label{prop:qperp}
  We have $\overline{\Ran \betxi} =  \Ann_\eta\xi$.
\end{proposition}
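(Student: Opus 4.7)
The plan is to identify $(\Ran\betxi)^\perp$ inside $\cH_\eta$ with the one-dimensional subspace $\lspan\{\xi\}$; the proposition then follows by taking orthogonal complements, noting that $\xi\in\cH_\eta$ (the rational function $\xi/\eta$ grows polynomially and is tamed by the Gaussian factor of $W_\eta$), so that $\Ann_\eta\xi = \lspan\{\xi\}^\perp$ is a closed subspace of codimension one. The inclusion $\lspan\{\xi\}\subseteq(\Ran\betxi)^\perp$ is free, since $\Ran\betxi\subseteq\Ann_\eta\xi$ is already established.

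For the reverse direction, I would take $f\in\cH_\eta$ satisfying $\la f,\betxi g\ra_\eta = 0$ for every $g\in\cDz$. Each such $g$ has compact support, so the boundary term in \eqref{eq:albefadj2} drops out and the identity rearranges to
\[
\int_\R (\alxiet f)\,\bg\,W_\xi\,dx = -\int_\R f\,(\beetxi\bg)\,W_\eta\,dx = 0,
\]
interpreted distributionally: the left-hand side is the natural pairing of the distribution $\alxiet f$ (defined by this very identity) against the compactly supported smooth test function $\bg W_\xi$. Since $\bg$ ranges over $\Cinf_c(\R)$ and $W_\xi$ is smooth and positive, this forces $\alxiet f = 0$ as a distribution on $\R$, equivalently $\xi f' - \xi' f = 0$ in the distributional sense.

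Because $\xi\in\cR$ is smooth and nowhere vanishing on $\R$, multiplication by $1/\xi$ preserves distributions, and the distributional quotient rule yields $(f/\xi)' = 0$. A distribution on the connected line $\R$ with vanishing derivative is a constant, so $f = c\xi$ almost everywhere for some scalar $c$. Hence $(\Ran\betxi)^\perp\subseteq\lspan\{\xi\}$, and taking orthogonal complements delivers $\overline{\Ran\betxi} = \Ann_\eta\xi$.

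The delicate step is the middle paragraph: from the abstract orthogonality condition on $f\in\cH_\eta$ one has to deduce pointwise that $f$ is a scalar multiple of $\xi$. The hypothesis $\xi\in\cR$ is crucial, since it is exactly what makes $\alxiet$ a nondegenerate first-order operator (the leading coefficient $\xi$ never vanishes) and therefore forces the distributional kernel to coincide with the classical, one-dimensional kernel spanned by $\xi$. Without the no-real-zeros assumption, extra solutions supported on the zero set of $\xi$ could appear and the regularity argument would fail.
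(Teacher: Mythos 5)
Your proof is correct, but it takes a genuinely different route from the paper's. You compute $(\Ran\betxi)^{\perp}$ directly: orthogonality of $f$ to the range means $\la f,\beetxi g\ra_\eta=0$ for all $g\in\cDz$, which you read as the vanishing of the distribution $\alxiet f$, and the nonvanishing of $\xi$ then pins $f$ to $\lspan\{\xi\}$; the statement follows by double orthocomplementation. In effect you are computing $\ker \betxi^{*}=(\Ran\betxi)^{\perp}$, which is close in spirit to the paper's later Proposition \ref{prop:AB0*}; note that the paper proves that proposition \emph{using} Proposition \ref{prop:qperp}, so your self-contained distributional argument reverses the logical order without introducing circularity. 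The paper instead argues constructively: given $h\in\Ann_\eta\xi$, it first replaces the approximating sequence $h_n\in\cDz$ by $g_n\in\cDz\cap\Ann_\eta\xi$ via a rank-one correction, and then exhibits an explicit preimage $f_n(x)=\teta(x)\int_{-\infty}^{x}g_n\xi\,W_\eta$; the condition $\la g_n,\xi\ra_\eta=0$ is exactly what makes $f_n$ vanish for large $x$, hence lie in $\cDz=\Dom\betxi$. The constructive route avoids distribution theory entirely and hands you the approximating sequence explicitly; your route is shorter and is the standard closed-range duality argument, at the cost of having to justify that the weighted pairing $\la f,\beetxi g\ra_\eta$ really is the distributional pairing of $\alxiet f$ against $\bg\, W_\xi$ (routine via \eqref{eq:albefadj}, since $f\in\rL^2(I,W_\eta)$ is locally integrable, but worth a line) and that the distributional kernel of the nondegenerate first-order expression $\alxiet$ coincides with its classical one-dimensional kernel.
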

\begin{proof}
  Let $\{ f_n\in \cD_0\}_{n\in \N}$ be a sequence such that
  $\beetxi f_n\nto h$ for some $h\in \cH_\eta$. Since
  $\alxiet\xi=0$, by \eqref{eq:albefadj}, we have
  \[\la \beetxi f_n,\xi \ra_\eta=0,\quad n\in \N.\]  Moreover, since
  \[\la h,\xi\ra_\eta =\la h-\beetxi f_n,\xi\ra_\eta\nto 0,\] we have
  $h\in \Ann_\eta\xi$.  Hence,
  $\overline{\Ran \betxi}\subseteq\Ann_\eta\xi$.
  
  We now prove the converse.  Let $h\in \Ann_\eta\xi$ be given.  Let
  $\{ h_n\in \cD_0\}_{n\in \N}$ be a sequence such that
  $\Vert h_n- h\Vert \nto 0$.  Choose a
  $p\in \cD_0$ such that $\la p,\xi\ra \ne 0$, and set
  \[ g_n = h_n - \frac{\la h_n,\xi\ra_\eta}{\la p,\xi\ra_\eta} p\in
    \cD_0,\quad n\in \N.\] By construction,
  $g_n \in\Ann_\eta\xi,\; n\in \N$. By assumption,
  $\la h_n,\xi\ra_\eta \nto 0$.  Hence,
  \[ \Vert g_n -h \Vert_\eta \leq \Vert h_n -h \Vert_\eta + \left|\frac{ \la
      h_n,\xi\ra_\eta}{\la
      p,\xi\ra_\eta }\right|\Vert p\Vert_\eta \nto 0.\] For $n\in \N$, set
\[ f_n(x) = \teta(x) \int_{-\infty}^x g_n \txi \teta^{-2}
  = \teta(x) \int_{-\infty}^x g_n \xi W_\eta ,\quad
  x\in \R.\] Since $g_n$ has compact support, $f_n(x) = 0$ for $x$
sufficiently small.  Since $g_n\in \Ann_\eta\xi$, we also have
$f_n(x) =0$ for $x$ sufficiently large. Hence, $f_n\in \cD_0$.
Observe that
  \[ \beetxi f_n = \txi^{-1} \teta^2 (\teta^{-1}f_n)'= g_n.\] Hence
  $g_n\in \Ran\betxi,\; n\in \N$.  We already showed that
  $g_n \nto h$. This proves that
  \[\Ann_\eta\xi\subseteq\overline{\Ran \betxi} .\]
\end{proof}

Let $\Axiet,\Betxi$ denote the maximal extensions of $\axiet,\betxi$,
respectively. Formally,
\begin{equation}
  \label{eq:domAdomB}
  \begin{aligned}
    \Dom\Axiet &= \{ f\in \cH_\eta \colon f\in \ACloc(\R); \alxiet f \in
    \cH_\xi\},\\
    \Dom\Betxi &= \{ g\in \cH_\xi \colon g\in \ACloc(\R); \beetxi g \in
    \cH_\eta\}.
  \end{aligned}
\end{equation}


\begin{proposition}
  \label{prop:AB*}
  For $f\in \Dom\Axiet$ and $g\in \Dom\Betxi$, we have
  \begin{equation}
    \label{eq:ABadj}
     \la \Axiet f,g\ra_\xi = \la f,\Betxi g\ra_\eta . 
  \end{equation}
\end{proposition}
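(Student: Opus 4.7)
The plan is to set $I = [-N, N]$ in the integrated adjoint identity \eqref{eq:albefadj2}, let $N \to \infty$, and argue that the boundary contribution $\heta f\, \hxi \bg \Big|_{-N}^{N}$ vanishes in the limit; identity \eqref{eq:ABadj} then falls out. Accordingly, for arbitrary $f \in \Dom \Axiet$ and $g \in \Dom \Betxi$, I need to verify two things: that both integrals on the left-hand side of \eqref{eq:albefadj2} converge absolutely over $\R$, and that $h(x) := \heta(x) f(x) \hxi(x) \bg(x)$ tends to zero as $x \to \pm\infty$.

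Absolute convergence of the integrals is a direct Cauchy--Schwarz computation. Since $\Axiet f, g \in \cH_\xi$, the product $(\alxiet f)\,\bg\, W_\xi$ is in $\rL^1(\R)$; since $f \in \cH_\eta$ and, using that the coefficients of $\beetxi$ are real, $\beetxi \bg = \overline{\beetxi g} \in \cH_\eta$, the product $f(\beetxi \bg)\, W_\eta$ is also in $\rL^1(\R)$. For the boundary function $h$ itself, observe that $\heta f = (\eta^{-1} e^{-x^2/2}) f$ lies in $\rL^2(\R, dx)$ because $\lVert \heta f \rVert_{\rL^2(\R)}^2 = \int |f|^2 W_\eta = \lVert f \rVert_\eta^2 < \infty$, and symmetrically $\hxi \bg \in \rL^2(\R, dx)$. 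Another application of Cauchy--Schwarz yields $h \in \rL^1(\R)$.

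Next, I would invoke \eqref{eq:albefadj} pointwise: because $f, g$ are locally absolutely continuous and $\eta^{-1}, \xi^{-1}, e^{-x^2}$ are smooth, $h$ is locally absolutely continuous with $h' = (\alxiet f)\bg\, W_\xi + f(\beetxi \bg) W_\eta$ almost everywhere. Since $h' \in \rL^1(\R)$, the formula $h(x) = h(0) + \int_0^x h'$ shows that both one-sided limits $\lim_{x \to \pm\infty} h(x)$ exist. Combined with $h \in \rL^1(\R)$, these limits must vanish: a nonzero limit would keep $|h|$ bounded below by a positive constant on a half-line, contradicting integrability.

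Having disposed of the boundary term, passing to $N \to \infty$ in \eqref{eq:albefadj2} produces \eqref{eq:ABadj}. The only real obstacle is the boundary calculation; the clean resolution comes from the observation that both $h$ and $h'$ are globally integrable, which is exactly the input needed for the standard \emph{``$\rL^1$ function with $\rL^1$ derivative vanishes at infinity''} argument.
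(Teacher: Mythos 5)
Your proof is correct and follows essentially the same route as the paper: both arguments integrate \eqref{eq:albefadj} over a growing interval and kill the boundary term by observing that $(\heta f)^2 = f^2 W_\eta$ and $(\hxi g)^2 = |g|^2 W_\xi$ place $\heta f$ and $\hxi \bg$ in $\rL^2(\R,dx)$, so their product is integrable and its (existing) limit at $\pm\infty$ must vanish. Your version merely spells out the existence of the one-sided limits via the fundamental theorem of calculus with $h'\in\rL^1$, a detail the paper leaves implicit.
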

\begin{proof}
  By \eqref{eq:albefadj2}, we have
  \[ \heta f \hxi g\Big|^t_{-t} \to \la \alxiet f, g\ra_\eta-\la
    f,\beetxi g\ra_\xi \quad \text{as } t\to \infty,\] with
  $\heta,\hxi$ as per \eqref{eq:heta}. Observe that
  \[ (\heta f)^2 = f^2 W_\eta,\quad (\hxi g)^2 = g^2 W_\xi.\]
  Hence, 
  $\heta f, \; \hxi g\in \rL^2(\R)$. Therefore, the above limit must
  be zero.
\end{proof}

\begin{proposition}
  \label{prop:AB0*}
  We have $\betxi^*\subseteq -\Axiet$ and $\axiet^*\subseteq -\Betxi$.
\end{proposition}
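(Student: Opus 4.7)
The plan is to establish both inclusions by the same mechanism, namely a standard one-dimensional distributional regularity argument; I describe the first, $\betxi^* \subseteq -\Axiet$, in detail, and the second is parallel.

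Fix $f \in \Dom \betxi^*$ with $\betxi^* f = h$, so that $f \in \cH_\eta$, $h \in \cH_\xi$, and
\begin{equation*}
  \la f, \betxi g\ra_\eta = \la h, g\ra_\xi \qquad \text{for every } g \in \cD_0.
\end{equation*}
What must be shown is that $f \in \ACloc(\R)$ and $\alxiet f = -h$ almost everywhere. Once this is in place, $\alxiet f = -h \in \cH_\xi$ places $f$ in $\Dom\Axiet$ by \eqref{eq:domAdomB} with $\Axiet f = -h$, giving $\betxi^* f = -\Axiet f$. The identification of actions will be an algebraic consequence of regularity, so the entire content lies in establishing the $\ACloc$ regularity.

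For that I would expand the weak identity above. Using \eqref{eq:betadef} together with $\teta = \eta e^{x^2}$, $\txi = \xi e^{x^2}$ one computes $\beetxi g = \xi^{-1}(\eta g' - \eta' g - 2x\eta g)$; incorporating the weight $W_\eta = \eta^{-2} e^{-x^2}$ and segregating the $g'$ term from the $g$ term, the identity becomes
\begin{equation*}
  \int_\R g'(x)\, \phi(x)\, dx \;=\; \int_\R g(x)\, \psi(x)\, dx \qquad \text{for all } g \in \cD_0,
\end{equation*}
where $\phi := f\, \xi^{-1}\eta^{-1} e^{-x^2}$ and $\psi$ is an explicit locally integrable combination of $f$, $h$, and smooth coefficients (the reciprocals $\xi^{-1}, \eta^{-1}$ are smooth on $\R$ because $\xi, \eta \in \cR$ have no real zeros, and $f, h \in \rL^2_{\mathrm{loc}} \subset \rL^1_{\mathrm{loc}}$). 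Thus the distributional derivative of $\phi$ equals $-\psi \in \rL^1_{\mathrm{loc}}(\R)$, and the classical one-dimensional principle that a distribution on $\R$ with a locally integrable derivative admits an absolutely continuous representative forces $\phi \in \ACloc(\R)$. Dividing by the smooth, nowhere-vanishing factor $\xi^{-1}\eta^{-1} e^{-x^2}$ gives $f \in \ACloc(\R)$, and differentiating $\phi$ classically almost everywhere and matching to $-\psi$ collapses, after cancellation of the $f\eta'$ and $2xf$ contributions, to $\xi f' - \xi' f = -h\eta$, i.e.\ $\alxiet f = -h$.

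The second inclusion $\axiet^* \subseteq -\Betxi$ proceeds by the identical template. For $g \in \Dom \axiet^*$ with $\axiet^* g = k$, the defining identity $\la \axiet f, g\ra_\xi = \la f, k\ra_\eta$ holds in particular for test functions $f \in \cD_0 \subset \cD_\xi$; rearranging as before, now with $\phi$ replaced by $g\, \xi^{-1}\eta^{-1} e^{-x^2}$, the same regularity principle produces $g \in \ACloc(\R)$, after which pointwise matching yields $\beetxi g = -k$ a.e., placing $g$ in $\Dom\Betxi$ with $\Betxi g = -k$. The sole (but routine) technical point in the whole argument is this regularity upgrade from the weak identity to $\ACloc$; once it is secured, the rest is algebraic bookkeeping.
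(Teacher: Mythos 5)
Your argument is correct, but it takes a genuinely different route from the paper's. The paper's proof constructs an explicit primitive $\tf(x)=\xi(x)\int_0^x h\eta\xi^{-2}$ satisfying $\alxiet \tf = h$ a.e., uses the Green's identity \eqref{eq:albefadj2} to deduce $\int_\R (f-\tf)(\beetxi\bg)\,W_\eta=0$ for all $g\in\cD_0$, and then invokes Proposition \ref{prop:qperp} (applied on each compact interval) to conclude that $f-\tf$ is a multiple of the kernel element $\xi$; since $\alxiet\xi=0$, this places $f$ in $\Dom\Axiet$ with the required action. The second inclusion is handled symmetrically via Proposition \ref{prop:RanA0}, after first observing that $h\in\Ann_\eta\xi$ so that the primitive $\teta\int_{-\infty}^x h\xi W_\eta$ is well defined. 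You bypass both the auxiliary primitive and the range propositions: you read the weak adjoint identity as saying that the locally integrable function $\phi=f\xi^{-1}\eta^{-1}e^{-x^2}$ has distributional derivative in $\rL^1_{\mathrm{loc}}$, and apply the one-dimensional regularity lemma to upgrade $f$ to $\ACloc$ directly. Your formula $\beetxi g=\xi^{-1}(\eta g'-\eta' g-2x\eta g)$ is right, the cancellations you describe do occur, and your bookkeeping lands on $\alxiet f=-h$, which is precisely the sign that the inclusion $\betxi^*\subseteq-\Axiet$ requires given that $\alxiet$ and $-\beetxi$ are the formal adjoints. What the paper's route buys is reuse of the already-established Propositions \ref{prop:RanA0} and \ref{prop:qperp} with no distributional language; what yours buys is a self-contained and somewhat shorter argument in which the kernel ambiguity (the possible additive multiple of $\xi$) never surfaces, and in which the second inclusion needs only test functions from $\cD_0\subset\cD_\xi$ rather than the full domain $\cD_\xi$. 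Both are standard mechanisms for computing maximal domains of first-order expressions, and either suffices here.
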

\begin{proof}
  Let $f\in \Dom\betxi^*\subset \cH_\eta$.  By definition, there exists
  an $h\in \cH_\xi$ such that
  \[ \la f, \beetxi g \ra_\eta = \la h, g\ra_\xi,\quad \text{for all }
    g\in \cD_0.\] Set
  \[ \tf(x) := \xi \int_{0}^x h\eta\xi^{-2},\quad x\in
    \R.\]
  Hence, by \eqref{eq:alphadef}, we have
  $\alxiet[\tf] = h$  almost everywhere.
  By \eqref{eq:albefadj2}, it then follows that
  \[ \int_\R \tf\, (\beetxi\bg)\, W_\eta =\la h ,g\ra_\xi ,\quad \text{for all
    } g\in \cD_0.\] Hence,
  \[ \int_\R (f-\tf)(\beetxi\bg) W_\eta = 0 \text{ for all } g\in
    \cD_0.\] Now, $f-\tf\in \rL^2(I,W_\eta)$ for every compact
  interval $I\subset \R$.  Hence, by Proposition \ref{prop:qperp}, $f$
  and $\tf$ differ by a multiple of $\xi$ a.e. on $I$. Since this is
  true for every $I$, it follows that $f-\tf$ is a multiple of $\xi$
  a.e. on all of $\R$. Therefore, $f\in \Dom\Axiet$ with
  $\alxiet f = h$.

  Next, let $f\in \Dom\axiet^*$.  By definition, there exists an
  $h\in \cH_\eta$ such that
  \[ \la f, \alxiet g \ra_\xi = \la h, g\ra_\eta,\quad \text{for all }
    g\in \cD_\xi\subset \cH_\eta.\] If $g=\xi$, then the above
  expressions vanish. Hence, $h\in\Ann_\eta\xi$, which implies that
  \[ \tf(x) := \teta(x) \int_{-\infty}^x h\xi W_\eta,\quad
    x\in \R\] is well defined.  By construction,
  $\beetxi\tf = h$ a.e.  It follows that
  \[ \int_\R \tf (\alxiet\bg)\, W_\xi = \la h, g\ra_\eta,\quad
    \text{for all } g\in \cD_\xi.\] Hence,
  \[ \int_\R (f-\tf) (\alxiet\bg)\,W_\xi = 0 \text{ for all } g\in
    \cD_\xi.\] Therefore, by Proposition \ref{prop:RanA0}, $f=\tf$ a.e. on
  $\R$.
\end{proof}

\begin{proposition}
  \label{prop:ABadj}
  We have $\Axiet=-\Betxi^*$ and $-\Betxi=\Axiet^*$.
\end{proposition}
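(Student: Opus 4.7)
The plan is to derive each of the two equalities from two opposing inclusions, using Proposition~\ref{prop:AB*} for one direction and the order-reversing property of the Hilbert-space adjoint together with Proposition~\ref{prop:AB0*} for the other.

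First I would establish $\Axiet \subseteq -\Betxi^*$ and $-\Betxi \subseteq \Axiet^*$ as direct readouts of Proposition~\ref{prop:AB*}. Fixing $f \in \Dom\Axiet$, the identity \eqref{eq:ABadj} exhibits the linear functional $g \mapsto \la f, \Betxi g\ra_\eta$ on $\Dom\Betxi$ as bounded and represented in $\cH_\eta$ by a (signed) multiple of $\Axiet f$. By the definition of the Hilbert-space adjoint, this forces $f \in \Dom\Betxi^*$ and identifies $\Betxi^* f$ with $-\Axiet f$, proving $\Axiet \subseteq -\Betxi^*$. The symmetric reading, exchanging the roles of the two operators, yields $-\Betxi \subseteq \Axiet^*$.

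For the reverse inclusions, I would use the standard fact that $S \subseteq T$ implies $T^* \subseteq S^*$. The maximal operators $\Axiet, \Betxi$ defined in \eqref{eq:domAdomB} are extensions of the minimally defined operators $\axiet, \betxi$, since $\cD_\xi \subseteq \Dom\Axiet$ and $\cD_0 \subseteq \Dom\Betxi$; hence $\axiet \subseteq \Axiet$, $\betxi \subseteq \Betxi$, and therefore $\Axiet^* \subseteq \axiet^*$, $\Betxi^* \subseteq \betxi^*$. Concatenating these with Proposition~\ref{prop:AB0*} gives
\[
  \Axiet^* \subseteq \axiet^* \subseteq -\Betxi, \qquad \Betxi^* \subseteq \betxi^* \subseteq -\Axiet,
\]
which supplies the missing inclusions and closes both equalities.

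There is no substantive analytic obstacle left at this stage: the real work has already been carried out in Propositions~\ref{prop:RanA0}--\ref{prop:AB0*}, which compute the adjoints of the minimally defined operators $\axiet, \betxi$ using the range descriptions $\overline{\Ran \axiet} = \cH_\xi$ and $\overline{\Ran \betxi} = \Ann_\eta \xi$. Proposition~\ref{prop:ABadj} then follows by matching Green's formula \eqref{eq:albefadj2} on the maximal domains with the previously established adjoint inclusions on the minimal ones.
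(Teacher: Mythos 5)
Your proposal is correct and follows essentially the same route as the paper: the inclusions $\Axiet\subseteq-\Betxi^*$ and $-\Betxi\subseteq\Axiet^*$ from Proposition~\ref{prop:AB*}, closed up by the chain $\Betxi^*\subseteq\betxi^*\subseteq-\Axiet$ and $\Axiet^*\subseteq\axiet^*\subseteq-\Betxi$ obtained from Proposition~\ref{prop:AB0*} and the order-reversal of adjoints under extension. The only difference is that you spell out the intermediate steps that the paper leaves implicit.
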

\begin{proof}
  By Proposition \ref{prop:AB*}, we have $\Axiet\subseteq -\Betxi^*$
  and $-\Betxi \subseteq \Axiet^*$.  By Proposition \ref{prop:AB0*},
  \[ -\Betxi^* \subseteq -\betxi^* \subseteq \Axiet,\quad\text{and}
    \quad \Axiet^*\subseteq  
    \axiet^* \subseteq -\Betxi,\] as was to be shown.
\end{proof}

Let $T_\eta:\cH_\eta\to \cH_\eta$ denote the densely defined operator
with action $\taueta$ and with maximal domain
\begin{equation}
  \label{eq:domTeta}
  \Dom(T_\eta) = \{ f\in \ACloc^1(\R) \colon \taueta f \in \cH_\eta
  \} 
\end{equation}
Let $T_\xi:\cH_\xi\to \cH_\xi$ be the analogous operator with action
$\tauxi$.

\begin{proposition}
  \label{prop:TBA}
  Let $\eta,\xi \in \cR$ be such that
  \begin{equation}
    \label{eq:chietaxi}
    \chiof{\eta,\xi} = \eival_0 \eta\xi,\quad \eival_0\in \R.
  \end{equation}
  Then,
  \begin{equation}
    \label{eq:Tetaxi}
    \begin{aligned}
      T_\eta &= \Betxi\Axiet+\eival_0,\\
      T_\xi &= \Axiet \Betxi+\eival_0+2
    \end{aligned}
  \end{equation}
 in the the sense of naturally
  defined composition; i.e.
  \begin{align*}
     \Dom \Betxi\Axiet &= \{ f\in \Dom\Axiet \colon \Axiet f \in
                         \Dom\Betxi \};\\ 
     \Dom \Axiet\Betxi &= \{ f\in \Dom\Betxi \colon \Betxi f \in \Dom\Axiet \}.
  \end{align*}  
\end{proposition}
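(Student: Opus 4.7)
The strategy splits naturally into an algebraic identity of differential expressions and an operator-theoretic matching of domains.

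For the algebraic part I would unfold the definitions \eqref{eq:alphadef}--\eqref{eq:betadef} to
\[\alxiet f = \eta^{-1}(\xi f'-\xi' f),\qquad \beetxi g = \xi^{-1}\lp \eta g' - (\eta' + 2x\eta)g\rp,\]
and then compute $\beetxi\alxiet f$ and $\alxiet\beetxi g$ directly. A careful expansion shows that the coefficients of $f''$ and $f'$ agree with those of $\tau_\eta$ (respectively $\tau_\xi$), so that only a zeroth-order discrepancy remains. Collecting it produces $\chiof{\eta,\xi}/(\eta\xi)$, which equals $\eival_0$ by hypothesis, in the first identity, and $\chiof{\eta,\xi}/(\eta\xi)+2$ in the second; the extra $+2$ traces back to differentiating the coefficient $2x\eta$ inside $\beetxi$, which produces a $2\eta g$ surviving the outer $\eta^{-1}$.

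The inclusion $\Betxi\Axiet+\eival_0\subseteq T_\eta$ is then essentially free: for $f\in\Dom(\Betxi\Axiet)$ the definitions \eqref{eq:domAdomB} give $f,\Axiet f\in\ACloc(\R)$, and since $\xi$ has no real zeros one may solve $\alxiet f = \eta^{-1}(\xi f'-\xi'f)$ for $f'$ to conclude $f\in\ACloc^1(\R)$; the algebraic identity then yields $\tau_\eta f = \Betxi\Axiet f + \eival_0 f\in\cH_\eta$, placing $f\in\Dom T_\eta$ with matching action. The analogous inclusion for $T_\xi$ is obtained symmetrically.

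For the reverse inclusion I would appeal to Proposition~\ref{prop:ABadj}. Since $\Betxi=-\Axiet^*$, the composition $\Betxi\Axiet$ equals $-\Axiet^*\Axiet$, which by von Neumann's theorem is a non-negative self-adjoint operator on the canonical domain $\{f\in\Dom\Axiet : \Axiet f\in\Dom\Axiet^*\}$, matching $\Dom(\Betxi\Axiet)$ as specified in the statement. Thus $\Betxi\Axiet+\eival_0$ is self-adjoint, and being contained in $T_\eta$ --- which is symmetric by the limit-point property of $W_\eta$ at $\pm\infty$ --- the two operators must coincide. The symmetric argument with $\Axiet=-\Betxi^*$ handles $T_\xi$. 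The principal obstacle is precisely this last step: one must ensure the hypotheses for the self-adjointness invocation, namely that $\Axiet$ and $\Betxi$ are closed and densely defined, and that $T_\eta$, $T_\xi$ are symmetric; both follow from the earlier material together with the Gaussian decay of the weights, but should be articulated explicitly.
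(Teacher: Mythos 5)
Your proposal is correct and follows essentially the same route as the paper: the formal factorization (the paper's Lemma~\ref{lem:taufac}, which you verify by direct expansion rather than via the Wronskian identity \eqref{eq:chi2}), self-adjointness of $\Betxi\Axiet=-\Axiet^*\Axiet$ via Proposition~\ref{prop:ABadj} and von Neumann's theorem (the paper's citation of \cite[Theorem 13.13]{Rudin}), and self-adjointness of $T_\eta,T_\xi$ from the limit-point property, with equality then following by maximality of self-adjoint operators among their symmetric extensions. Your explicit domain inclusion $\Betxi\Axiet+\eival_0\subseteq T_\eta$ usefully makes precise the paper's terser ``no boundary conditions'' step; the only slip is the claim that $-\Axiet^*\Axiet$ is non-negative (it is non-positive), which is immaterial to the argument.
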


\begin{lemma}
  \label{lem:xietaeigen}
  Relation \eqref{eq:chietaxi} is equivalent to either of the following:
  \begin{align}
    \label{eq:tauetaxi}
    \taueta \xi &= \eival_0 \xi,\\
    \label{eq:tauxiteta}
    \tauxi\teta &= (\eival_0+2)  \teta.
  \end{align}
\end{lemma}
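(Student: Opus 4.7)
The plan is to reduce both equivalences to a direct computation of $\chi$. The first equivalence \eqref{eq:tauetaxi} is immediate from \eqref{eq:taudef}: since $\taueta\xi = \eta^{-1}\chiof{\eta,\xi}$, the assertion $\taueta\xi = \eival_0\xi$ is literally the same as $\chiof{\eta,\xi} = \eival_0\eta\xi$, no work required.

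For the second equivalence \eqref{eq:tauxiteta}, I would compute $\chiof{\xi,\teta}$ from scratch using $\teta = \eta e^{x^2}$. Differentiating gives $\teta' = (\eta'+2x\eta)e^{x^2}$ and $\teta'' = (\eta''+4x\eta'+(4x^2+2)\eta)e^{x^2}$. Substituting into \eqref{eq:chider} and collecting terms, I expect to arrive at the key identity
\begin{equation*}
  \chiof{\xi,\teta} = e^{x^2}\bigl[\chiof{\eta,\xi} + 2\xi\eta\bigr].
\end{equation*}
The symmetric part $\xi\eta'' - 2\xi'\eta' + \xi''\eta$ of $\chiof{\xi,\teta}/e^{x^2}$ agrees automatically with that of $\chiof{\eta,\xi}$, while the surviving first-order piece $2x(\xi\eta'-\xi'\eta) = -2x(\eta\xi'-\eta'\xi)$ reproduces, with the correct sign, the antisymmetric part of $\chiof{\eta,\xi}$ (and not of $\chiof{\xi,\eta}$, whose sign differs). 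The extra summand $+2\xi\eta$ will account for the ``$+2$'' shift in \eqref{eq:tauxiteta}.

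With this identity in hand, dividing by $\xi$ yields $\tauxi\teta = \xi^{-1}e^{x^2}\bigl[\chiof{\eta,\xi}+2\xi\eta\bigr]$. Substituting \eqref{eq:chietaxi} collapses the right-hand side to $(\eival_0+2)\eta e^{x^2} = (\eival_0+2)\teta$, establishing \eqref{eq:tauxiteta}; conversely, the identity makes \eqref{eq:tauxiteta} algebraically equivalent to $\chiof{\eta,\xi} = \eival_0\eta\xi$, closing the loop.

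The main obstacle is bookkeeping in the expansion: one must track the two $4x^2\xi\eta$ contributions, one coming from the $\xi\teta''$ term and the other from the $-2x\cdot 2x\xi\eta$ piece of $-2x(\xi\teta'-\xi'\teta)$, and verify that they cancel exactly so that only the clean additive $+2\xi\eta$ shift survives. The sign conventions in the antisymmetric part of $\chi$, together with the fact that $\chi$ is not symmetric in its two arguments, make this the easiest step to slip up on; getting the shift ``$+2$'' rather than ``$-2$'' or ``$0$'' is precisely what distinguishes $\tauxi$ acting on $\teta$ from $\taueta$ acting on $\xi$.
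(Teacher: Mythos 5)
Your proposal is correct and follows essentially the same route as the paper: the identity you derive, $\chiof{\xi,\teta} = e^{x^2}\chiof{\eta,\xi} + 2\xi\teta$, is precisely the paper's key relation \eqref{eq:chi1}, and the rest is the same division-and-substitution argument. The only difference is that you spell out the verification of that identity, which the paper leaves as a ``direct calculation.''
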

\begin{proof}
  The equivalence of \eqref{eq:chietaxi} and \eqref{eq:tauetaxi}
  follows directly from the definition \eqref{eq:taudef}.  The
  following differential relation can be verified by direct
  calculation:
  \begin{equation}
      \label{eq:chi1}
      \chiof{\xi,\teta}= e^{x^2} \chiof{\eta,\xi}+2 \xi\teta
  \end{equation}
  Hence,
  \[ \teta \taueta\xi = e^{x^2} \chiof{\eta,\xi} =
    \chiof{\xi,\teta}-2 \xi\teta = \xi (\tauxi-2)\teta . \] This
  proves the equivalence of \eqref{eq:tauetaxi} and
  \eqref{eq:tauxiteta}.
\end{proof}

\begin{lemma}
  \label{lem:taufac}
  Suppose that \eqref{eq:chietaxi} holds.  Then,
  \begin{align}
      \label{eq:taueta1}
    \taueta &= \beetxi  \alxiet + \eival_0,\\
      \label{eq:tauxi1}
    \tauxi &=  \alxiet\beetxi  +\eival_0+2
  \end{align}
\end{lemma}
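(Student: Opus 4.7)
The plan is to exploit two structural facts: first, that $\alxiet$ annihilates $\xi$ and $\beetxi$ annihilates $\teta$ directly from the Wronskian in their definitions; and second, by Lemma \ref{lem:xietaeigen}, the hypothesis \eqref{eq:chietaxi} is equivalent to the eigenfunction relations $\taueta\xi = \eival_0\xi$ and $\tauxi\teta = (\eival_0+2)\teta$. Combined, these will let me identify $\taueta - \beetxi\alxiet$ and $\tauxi - \alxiet\beetxi$ as scalar multiplication operators whose constants I can then read off by evaluating on nowhere-vanishing functions.

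Concretely, the first step is to write the two first-order expressions out explicitly as
\[
\alxiet f = (\xi/\eta) f' - (\xi'/\eta) f, \qquad
\beetxi g = (\eta/\xi) g' - \bigl((\eta' + 2x\eta)/\xi\bigr) g,
\]
using \eqref{eq:alphadef}, \eqref{eq:betadef}, and $\teta' = (\eta'+2x\eta)e^{x^2}$. The annihilation identities $\alxiet\xi = \eta^{-1}\WRof{\xi,\xi} = 0$ and $\beetxi\teta = \txi^{-1}\WRof{\teta,\teta} = 0$ then follow at once.

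The next step is to compare leading symbols. Using the composition rule $(a_1 D + a_0)(b_1 D + b_0) = a_1 b_1 D^2 + (a_1 b_1' + a_1 b_0 + a_0 b_1) D + (a_1 b_0' + a_0 b_0)$, a short calculation shows that both $\beetxi\alxiet$ and $\alxiet\beetxi$ have leading coefficient $1$, and their $D$-coefficients collapse to $-2(\eta'/\eta + x)$ and $-2(\xi'/\xi + x)$, respectively. Reading off the corresponding coefficients of $\taueta$ and $\tauxi$ from \eqref{eq:tauform}, the top two orders match; hence $\taueta - \beetxi\alxiet$ and $\tauxi - \alxiet\beetxi$ are each multiplication by some smooth function on $\R$.

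To finish, I would evaluate these multiplication operators at $\xi$ and $\teta$: the annihilation from the first step kills the composition, while Lemma \ref{lem:xietaeigen} supplies $\taueta\xi = \eival_0\xi$ and $\tauxi\teta = (\eival_0+2)\teta$. Since $\xi$ and $\teta$ have no real zeros (the latter because $\eta\in\cR$ and $e^{x^2}>0$), the two multiplication operators are forced to be the constants $\eival_0$ and $\eival_0+2$, yielding \eqref{eq:taueta1} and \eqref{eq:tauxi1}. The only substantive arithmetic is the cancellation in the $D$-coefficient comparison, where the $2x$-correction in $\beetxi$ coming from $\teta = \eta e^{x^2}$ is precisely what is needed to recover the $-2x$ in $\taueta$ and $\tauxi$; this is the one place where a careless bookkeeping would fail.
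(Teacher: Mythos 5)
Your proof is correct, but it takes a genuinely different route from the paper's. The paper verifies the purely algebraic identity \eqref{eq:chi2}, namely $\chiof{\eta,f}\,\xi-\chiof{\eta,\xi}\,f=\teta^{-1}\WRof{\eta\teta,\WRof{\xi,f}}$, reads off \eqref{eq:taueta1} from it directly for every $f$ (using the hypothesis only in the form $\taueta\xi=\eival_0\xi$), and then obtains \eqref{eq:tauxi1} by the substitution $\eta\mapsto\xi$, $\xi\mapsto\teta$, $\eival_0\mapsto\eival_0+2$, via the auxiliary relations $\alpha_{\teta,\xi}f=e^{x^2}\beetxi f$ and $\beta_{\xi,\teta}f=\alxiet(e^{-x^2}f)$ together with \eqref{eq:tauxiteta}. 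You instead compare principal parts: I checked your symbol computations, and indeed both compositions are monic with first-order coefficients $-2(\eta'/\eta+x)$ and $-2(\xi'/\xi+x)$ respectively (the $2x\eta$ term in $\teta'$ is exactly what produces the $-2x$), so $\taueta-\beetxi\alxiet$ and $\tauxi-\alxiet\beetxi$ are zeroth-order; evaluating on the nowhere-vanishing functions $\xi$ and $\teta$, with $\alxiet\xi=0$, $\beetxi\teta=0$ and Lemma \ref{lem:xietaeigen}, pins the multipliers down to $\eival_0$ and $\eival_0+2$. Your version trades the paper's single second-order Wronskian identity for two routine first-order compositions plus the observation that a multiplication operator is determined by its action on one nonvanishing function, and it treats the two factorizations symmetrically rather than deriving one from the other by substitution; the paper's version has the merit that \eqref{eq:chi2} holds for arbitrary $\eta,\xi$ without hypothesis \eqref{eq:chietaxi} and exhibits the factorization error term explicitly, while yours makes the intertwining-plus-kernel mechanism more transparent. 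Both arguments are sound; the only point worth making explicit in your write-up is that the evaluation step uses $\eta,\xi\in\cR$ (no real zeros) so that $\xi$ and $\teta=\eta e^{x^2}$ never vanish on $\R$.
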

\begin{proof}
  By definition and direct calculation,
  \begin{equation}
  \label{eq:chi2}
  \chiof{\eta,f} \xi-
  \chiof{\eta,\xi}f  =  \teta^{-1} \WRof{ \eta\teta,  \WRof{\xi,f}}
  \end{equation}
  Hence, by \eqref{eq:alphadef} - \eqref{eq:taudef},
  \begin{align*}
    \taueta f
    &= (\txi\eta^2)^{-1}\WRof{ \eta\teta,\WRof{\xi,f}}+
      \xi^{-1} f      \taueta\xi\\ 
    &= \txi^{-1}\WRof{\teta,\eta^{-1}\WRof{\xi,f}}+ \eival_0 f\\
    &= \beetxi\alxiet f + \eival_0 f
  \end{align*}
  By definition and direct calculation,
  \[
    \begin{aligned}
    \alpha_{\teta,\xi}f&= e^{x^2}  \beetxi f,\\
    \beta_{\xi,\teta}f
    &= e^{-2x^2}\eta^{-1} \WRof{e^{x^2}\xi,f}= \eta^{-1}
    \WRof{\xi,e^{-x^2} f} = \alxiet(e^{-x^2}f)
    \end{aligned}
    \]
  Applying \eqref{eq:taueta1} with $\eta\mapsto \xi,\;\xi\mapsto
  \teta,\; \eival_0 \mapsto \eival_0+2$, and using
  \eqref{eq:tauxiteta} gives
  \begin{align*}
    \tauxi
    &= \beta_{\xi,\teta}  \alpha_{\teta,\xi} + \eival_0+2 =
      \alxiet \beetxi + \eival_0 +2.
  \end{align*}
\end{proof}

\begin{proof}
  By Lemma \ref{lem:taufac}, the formal factorization relations
  \eqref{eq:taueta1} and \eqref{eq:tauxi1} hold.  The differential
  expressions $\taueta, \tauxi$ are limit point at $\pm \infty$; see
  \cite[Theorem 2.4]{CL}.  Hence, $T_\eta, T_\xi$ are self-adjoint.
  By Proposition \ref{prop:ABadj}, $\Axiet, -\Betxi$ are
  adjoint. Hence, the natural compositions $\Axiet \Betxi$ and
  $\Betxi \Axiet$ are self-adjoint by a standard theorem in functional
  analysis; see for example \cite[Theorem 13.13]{Rudin}.  Since there
  are no boundary conditions at $\pm \infty$ the LHS and the RHS of
  \eqref{eq:Tetaxi} must coincide.
\end{proof}
\begin{proposition}
  \label{prop:Anorm}
  Let $\eta,\xi\in \cR$ such that \eqref{eq:chietaxi} holds.  Let
  $\psi\in \cH_\eta$ be an eigenfunction of $T_\eta$; i.e.
  $\taueta\psi = \eival \psi,\; \eival\in \R$.
  Then, necessarily   $\eival_0>\eival$ and 
  \[ \hpsi := \alxiet\psi \] is an eigenfunction of $T_\xi$ with
  eigenvalue $\eival+2$.  Moreover,
  \begin{equation}
    \label{eq:hpsinorm}
    \Vert \hpsi \Vert^2_\xi = (\eival_0-\eival) \Vert \psi \Vert_\eta^2
  \end{equation}
\end{proposition}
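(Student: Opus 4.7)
The strategy is to push the eigenvalue equation through the intertwining operators using the factorizations of Proposition \ref{prop:TBA}. The key observation is that $T_\eta = \Betxi \Axiet + \eival_0$ as self-adjoint operators with naturally composed domains, so $\psi \in \Dom(T_\eta)$ forces $\psi \in \Dom(\Axiet)$ together with $\hpsi := \Axiet \psi \in \Dom(\Betxi)$. Rearranging $T_\eta \psi = \eival \psi$ then gives $\Betxi \hpsi = (\eival - \eival_0)\psi$.

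To show that $\hpsi$ is an eigenfunction of $T_\xi$ with eigenvalue $\eival + 2$, I first verify $\hpsi \in \Dom(T_\xi) = \Dom(\Axiet \Betxi)$. This is immediate: $\hpsi \in \Dom(\Betxi)$, and $\Betxi \hpsi$, being a scalar multiple of $\psi$, lies in $\Dom(\Axiet)$. The second factorization $T_\xi = \Axiet \Betxi + \eival_0 + 2$ then delivers
\[ T_\xi \hpsi = (\eival - \eival_0)\Axiet \psi + (\eival_0+2)\hpsi = (\eival+2)\hpsi. \]

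For the norm identity, I would invoke the adjoint relation $\Axiet^* = -\Betxi$ from Proposition \ref{prop:ABadj}, so that
\[ \Vert \hpsi \Vert_\xi^2 = \la \Axiet\psi, \Axiet\psi\ra_\xi = -\la \psi, \Betxi\Axiet\psi\ra_\eta = (\eival_0 - \eival)\Vert \psi \Vert_\eta^2. \]
Non-negativity of the left-hand side immediately gives $\eival_0 \geq \eival$.

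The main subtlety, and likely the only real obstacle, is the strict inequality. Equality $\eival_0 = \eival$ would force $\hpsi = 0$, i.e.\ $\WRof{\xi,\psi} \equiv 0$, and hence $\psi = c\xi$ for some constant $c$. One must rule this case out to obtain strict positivity of $\Vert \hpsi \Vert_\xi^2$; this should follow because in the intended setup $\xi$ does not qualify as an eigenfunction of $T_\eta$ (either by falling outside $\cH_\eta$ or by failing an admissibility condition implicit in the hypotheses on $\xi$). Everything else is essentially a mechanical chase through the factorization and adjoint identities already assembled in the preceding propositions.
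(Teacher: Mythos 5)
Your argument is essentially the paper's own proof: both establish $\psi\in\Dom\Axiet$ and $\hpsi\in\Dom\Betxi$ from the factorization $T_\eta=\Betxi\Axiet+\eival_0$ of Proposition \ref{prop:TBA}, derive the norm identity from the adjointness $\la\Axiet\psi,\Axiet\psi\ra_\xi=-\la\Betxi\Axiet\psi,\psi\ra_\eta$, and obtain the eigenvalue $\eival+2$ by chasing $\hpsi$ through the second factorization. So on the main content you and the paper coincide.

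The one point where you go beyond the paper is the strict inequality $\eival_0>\eival$, and there your proposed resolution is not right: $\xi$ is a polynomial and $W_\eta$ decays like a Gaussian, so $\xi$ \emph{does} lie in $\cH_\eta$, and by Lemma \ref{lem:xietaeigen} it is a genuine eigenfunction of $T_\eta$ with eigenvalue exactly $\eival_0$. Thus the case $\psi=c\xi$ cannot be excluded on integrability or admissibility grounds; it is excluded only implicitly, because then $\hpsi=0$ is not an eigenfunction. The honest statement is that your computation yields $\eival_0\ge\eival$ with equality precisely when $\psi\propto\xi$; strictness for all other eigenfunctions is what the Sturm oscillation argument in Theorem \ref{thm:Tetxi} supplies ($\xi$ has no real zeros, hence is the ground state and its eigenvalue is simple). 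The paper's own proof silently ignores this point, so you deserve credit for flagging it, but the fix you sketch would not work as written.
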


\begin{proof}
  By assumption $\psi \in \Dom T_\eta$.  Hence, 
  by Proposition \ref{prop:TBA}, $\psi \in \Dom \Axiet$, and 
  $\hpsi\in H_\xi$.  
  By \eqref{eq:ABadj} and Proposition \ref{prop:TBA}, we have
  \begin{align*}
    \la \hpsi,\hpsi \ra_\xi
    &= -\la \Betxi \Axiet \psi,\psi \ra_\eta\\
    &= \la (\eival_0 - T_\eta) \psi,\psi \ra_\eta =
      (\eival_0-\eival) \la \psi,\psi\ra_\eta.
  \end{align*}
  By construction,
  \begin{align*}
    \beetxi \hpsi
    &= \beetxi \alxiet \psi  = (\eival-\eival_0) \psi\in    \cH_\eta,\\
    \alxiet\beetxi \hpsi
    &= (\eival-\eival_0) \hpsi \in \cH_\xi.
  \end{align*}
  Hence, $\hpsi \in \Dom \Betxi$ and $\beetxi\hpsi \in \Dom \Axiet$.
  It follows that, $\hpsi \in \Dom T_\xi$, with
  \[ T_\xi \hpsi = (\Axiet \Betxi+ \eival_0+2) \hpsi = (\eival+2) \hpsi.\]
\end{proof}

\begin{theorem}
  \label{thm:Tetxi}
  Let $\eta,\xi\in \cR$ be such that \eqref{eq:chietaxi} holds.  Let
  $\sigma(T_\eta), \sigma(T_\xi)$ denote the spectral sets of the
  indicated operators.  Then,   $\eival_0= \max \sigma(T_\eta)$.  Moreover,
  \[ \sigma(T_\xi-2) = \sigma(T_\eta)\setminus \{ \eival_0 \}.\]
\end{theorem}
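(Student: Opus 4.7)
The plan is to combine the factorizations of Proposition \ref{prop:TBA} with the adjoint relation $\Axiet^* = -\Betxi$ from Proposition \ref{prop:ABadj}. Writing $A = \Axiet$ for brevity, these give
\[ T_\eta - \eival_0 = -A^*A, \qquad T_\xi - \eival_0 - 2 = -AA^*, \]
exhibiting $\eival_0 - T_\eta$ and $\eival_0 + 2 - T_\xi$ as non-negative self-adjoint operators.

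First I would establish $\eival_0 = \max\sigma(T_\eta)$. The upper bound $\sigma(T_\eta) \subseteq (-\infty,\eival_0]$ is immediate from $T_\eta - \eival_0 = -A^*A \leq 0$. For the matching lower bound, note that $\xi$ is a polynomial and $W_\eta = \eta^{-2}e^{-x^2}$ has Gaussian decay, so $\xi \in \cH_\eta$; by Lemma \ref{lem:xietaeigen}, $\taueta\xi = \eival_0\xi$, so $\xi$ is a genuine eigenfunction of $T_\eta$ at $\eival_0$, placing $\eival_0$ in $\sigma_p(T_\eta)$.

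For the spectral identity I would invoke the standard fact that $\sigma(C^*C)\setminus\{0\} = \sigma(CC^*)\setminus\{0\}$ for any closed densely defined operator $C$ between Hilbert spaces. Applied with $C = A$, using the factorizations above, this reads $(\eival_0 - \sigma(T_\eta))\setminus\{0\} = (\eival_0 + 2 - \sigma(T_\xi))\setminus\{0\}$, which rearranges to $\sigma(T_\eta)\setminus\{\eival_0\} = \sigma(T_\xi - 2)\setminus\{\eival_0\}$. The theorem therefore reduces to the single statement $\eival_0 + 2 \notin \sigma(T_\xi)$.

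Ruling $\eival_0 + 2$ out of the point spectrum is direct: if $T_\xi\phi = (\eival_0+2)\phi$, then $\Vert\Betxi\phi\Vert_\eta^2 = \la\phi, B^*B\phi\ra_\xi = 0$ forces $\Betxi\phi = 0$; but $\beetxi g = 0$ makes $g$ proportional to $\txi = \xi e^{x^2}$, whose $\cH_\xi$-norm integral $\int \txi^2 W_\xi\,dx = \int e^{x^2}\,dx$ diverges, so $\phi = 0$. The main obstacle will be excluding $\eival_0 + 2$ from the continuous spectrum of $T_\xi$, which I would handle by establishing that $T_\xi$ has purely discrete spectrum: after Liouville normalization, $\tauxi$ becomes a Schr\"odinger operator whose potential agrees with the harmonic oscillator $x^2$ up to rationally decaying corrections at $\pm\infty$, yielding compact resolvent. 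With $\sigma(T_\xi) = \sigma_p(T_\xi)$, the exclusion follows from the preceding eigenvalue analysis, completing the proof.
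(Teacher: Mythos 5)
Your argument is correct, but it follows a genuinely different route from the paper's --- in essence it is Deift's commutation-formula argument, which the authors explicitly state they chose to avoid in favour of a more elementary ODE-based proof. The paper gets $\eival_0=\max\sigma(T_\eta)$ from the Sturm oscillation theorem (using that $\xi$ has no real zeros), and then obtains the spectral identity by pushing the complete orthogonal eigenbasis $\{\psi_n\}$ of $T_\eta$ through $\Axiet$: Proposition \ref{prop:Anorm} shows the images are eigenfunctions of $T_\xi$ with nonzero norm, and the density of the range of $\axiet$ (Proposition \ref{prop:RanA0}) shows these images are complete in $\cH_\xi$, so no further spectrum can occur. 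You instead read off $\sigma(T_\eta)\subseteq(-\infty,\eival_0]$ from $T_\eta-\eival_0=-\Axiet^*\Axiet\le 0$, get attainment from $\xi\in\Dom T_\eta$, and then invoke $\sigma(C^*C)\setminus\{0\}=\sigma(CC^*)\setminus\{0\}$ for the closed operator $C=\Axiet$ (closed because it is an adjoint, by Proposition \ref{prop:ABadj}), reducing the whole theorem to $\eival_0+2\notin\sigma(T_\xi)$. Your route dispenses with the oscillation theorem and the completeness bookkeeping, at the price of importing the abstract commutation theorem and of still needing discreteness of $\sigma(T_\xi)$ to exclude $\eival_0+2$ from the essential spectrum --- which is the same analytic input (limit-point at $\pm\infty$, pure point spectrum) that the paper cites from Titchmarsh and Coddington--Levinson. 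Both proofs thus rest on the same hard facts; yours concentrates the ODE theory into a single exclusion step and generalizes more readily beyond Sturm--Liouville operators.

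Two small corrections that do not affect the outcome. First, the kernel of $\beetxi=\alpha_{\teta,\txi}$ is spanned by $\teta=\eta e^{x^2}$ (the formal eigenfunction of $\tauxi$ at $\eival_0+2$, cf.\ Lemma \ref{lem:xietaeigen}), not by $\txi$; the relevant norm integral is then $\int_\R(\eta/\xi)^2e^{x^2}$, which still diverges, so $\phi=0$ as you claim. Second, the Liouville-form potential of $\tauxi$ differs from $x^2$ by terms that are merely bounded at infinity (e.g.\ $2x\xi'/\xi\to 2\deg\xi$), not decaying; since the potential still tends to $+\infty$, the compact-resolvent conclusion is unaffected.
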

\begin{proof}
  Both $T_\eta$ and $T_\xi$ have a pure point spectrum; for a proof
  see Section 5.9 of \cite{Titchmarsh} or Problem 1 in Ch. 9 of
  \cite{CL}.  By assumption, $\psi_0:=\xi$ has no real zeros.
  Relation \eqref{eq:chietaxi} is equivalent to
  \[ \taueta \psi_0 = \eival_0 \psi_0.\]
  Hence, $\eival_0  = \max \sigma(T_\eta)$
  by the Sturm oscillation theorem.   Let $\psi_n\in \cH_\eta,\; n\in
  \N$ be  the other eigenfunctions of $T_\eta$ with
  \[ \taueta\psi_n =   \eival_n \psi_n,\quad n=1,2,\ldots \]
  and $\eival_0>\eival_1>\eival_2>\cdots$.  Set
  \[ \hpsi_n = \alxiet \psi_n,\quad n=1,2,\ldots.\] By Proposition
  \ref{prop:Anorm}, these are all eigenfunctions of $T_\xi$ with
  eigenvalues $\eival_n,\; n=1,2,\ldots$.  By Proposition
  \ref{prop:RanA0}, $\Ran \Axiet = \cH_\xi$.  Since $\{ \psi_n
  \}_{n\in \Nz}$ is complete in $\cH_\eta$, it follows that
  $\{ \hpsi_n \}_{n\in \N}$ is complete in $\cH_\xi$.  Therefore
  $\sigma(T_\xi) = \{ \eival_n \}_{n\in \N}$.
\end{proof}
Deift \cite{deift} proved a more general version of Theorem
\ref{thm:Tetxi} regarding the spectra of general self-adjoint
operators related by a factorization/Darboux transformation.
Nonetheless, for the sake of comprehensiveness we prefer to state and
prove a more restrictive version limited to Sturm-Liouville operators.
The above proof of Theorem \ref{thm:Tetxi} is more accessible than
Deift's more abstract argument.

\section{The formal theory of Exceptional Hermite polynomials}

Classical Hermite polynomials are orthogonal polynomials defined by
the recurrence relation
\begin{equation}
  \label{eq:herm3trr}
  H_0=1,\quad x H_n = \frac12 H_{n+1} + n H_{n-1},\quad n=1,2,\ldots
\end{equation}
They satisfy
 the following orthogonality relation:
\begin{equation}
  \label{eq:hortho}
  \int_{-\infty}^\infty H_m(x) H_n(x) e^{-x^2} dx = \sqrt{\pi}\, 2^n
  n! \delta_{n,m}
\end{equation}
Hermite polynomials are known as classical orthogonal polynomials,
because they also arise as solutions of the Hermite differential
equation
\begin{equation}
  \label{eq:hermde}
  y '' - 2x y' = \eival y
\end{equation}
with $y=H_n,\eival = -2n,\; n\in \Nz$.  Note that multiplication of
\eqref{eq:hermde} by $-e^{-x^2}$ gives a singular Sturm-Liouville
problem on $(-\infty,\infty)$, namely
\[ - (e^{-x^2} y)' = -\eival e^{-x^2}y.\] This SLP is limit-point at
$\pm \infty$, so no explicit boundary conditions are required.


Exceptional Hermite polynomials are a generalization of the classical
Hermite polynomials because they satisfy a second-order, Hermite-like
differential equation.  Each family of such polynomials is indexed by
a partition, and so we begin by recalling some relevant definitions.

\begin{definition}
  A partition $\lambda$ of a natural number $N$ is a non-increasing,
  finitely supported sequence of non-negative integers
  $\lambda_1\geq \lambda_2\geq \ldots \ge 0$ that sum to $N$.  The
  length $\ell$ of a partition $\lambda$ is defined to be the smallest
  $\ell\in \Nz$ such that $\lambda_{\ell+1}=0$.  Thus,
  $N=\lambda_1 + \cdots + \lambda_\ell$, with $\lambda_\ell>0$.
\end{definition}


Fix a partition $\lambda$, and let
\begin{align}
  \label{eq:Clamdef}
  \Clamof{l}
  &:= 2^{l(l-1)/2}\prod_{1\le i<j\le l}
    (\lambda_i-\lambda_j+j-i)\\
  \label{eq:pilam}
  \pilamof{l}(n)
  &:= \prod_{i=1}^l(n-N-\lambda_i+i),\quad k\in \N,\\
  \label{eq:midef}
  m_i &:= \lambda_i + \ell-i,\quad i\in \N;\\
  \label{eq:etalam}
  \etalam
  &:= (\Clam_\ell)^{-1}\WRof{H_{m_\ell},\dots,H_{m_1}},\\
  \label{eq:Ilamdef}
  \Ilam
  &:= \{ n\in \Nz \colon n-N+\ell \ge 0 \text { and }
    \pilamof{\ell}(n)\neq 0.\},  \\
  \label{eq:Hlamn}
  \Hlamof n
  &:=(2^\ell\Clam_\ell\pilamof{\ell}(n))^{-1}
    \WRof{H_{m_\ell},\dots,H_{m_1},H_{n-N+\ell}},\quad      n\in \Ilam
\end{align}
where $\WR$ denotes the usual Wronskian determinant.

Observe that $n-N+\ell\in \{ m_1, \ldots, m_\ell \}$ if and only if
$\pilamof{\ell}(n) =0$. Thus, $\Hlamof{n},\; n\in \Ilam$ is a non-zero
Wronskian of classical Hermite polynomials.  Also note that if $N=0$
and $\lambda$ is the empty partition, then $\etalam=1$ and
$\Hlamof{n} = H_n$, is the classical $n$th degree Hermite polynomial.

\begin{proposition}
  \label{prop:degHlam}
  Let $\etalam, \Hlamof{n},\; n\in \Ilam$ be as above. Then
  \begin{align}
    \label{eq:etadeg}
    \etalam(x) &= 2^N x^N + \text{ lower degree terms.}\\
    \label{eq:Hlamndeg}
    \Hlamof{n}(x) &= 2^n x^n + \text{ lower degree terms.}
  \end{align}
\end{proposition}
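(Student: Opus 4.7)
The plan is to compute the leading term of each Wronskian directly, using only $H_k(x) = 2^k x^k + \text{lower}$, and to check that the normalizing constants $\Clam_\ell$ and $\pilamof{\ell}(n)$ are precisely the ones that rescale that leading term to $2^Nx^N$, respectively $2^nx^n$. Throughout, for integers $k,j\ge 0$, let $k^{\underline{j}} = k(k-1)\cdots(k-j+1)$ denote the falling factorial, so that $H_k^{(j-1)}(x) = 2^k\, k^{\underline{j-1}}\, x^{k-j+1} + (\text{lower degree})$.

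First I would extract the leading term of a generic Wronskian $\WRof{f_1,\dots,f_L}$ whose arguments are polynomials with pairwise distinct degrees $n_i$ and leading coefficients $a_i$. Replacing each entry of the Wronskian matrix by its leading monomial, then factoring $a_i x^{n_i}$ out of column $i$ and $x^{1-j}$ out of row $j$, gives
\begin{equation*}
  \WRof{f_1,\dots,f_L} = \lp\prod_{i=1}^L a_i\rp\, x^{\sum_i n_i - L(L-1)/2}\,
  \det\bigl[n_i^{\underline{j-1}}\bigr]_{j,i=1}^L + (\text{lower degree}).
\end{equation*}
Because the falling factorials $t^{\underline{0}}, t^{\underline{1}}, \dots$ and the ordinary monomials are related by a unit upper-triangular change of basis, the residual determinant equals the Vandermonde $\prod_{1\le i<k\le L}(n_k - n_i)$, which is nonzero precisely by distinctness of the $n_i$.

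Applying this to $\etalam$ with $L = \ell$ and degrees $(n_1,\dots,n_\ell) = (m_\ell, m_{\ell-1}, \dots, m_1)$, a short calculation gives $\sum_i n_i = \sum_i m_i = N + \ell(\ell-1)/2$ and $\prod_i a_i = 2^{N + \ell(\ell-1)/2}$, so the $x$-exponent collapses to $N$. After reversing the indexing, the Vandermonde factor equals $\prod_{1\le p<q\le \ell}(m_p - m_q)$, which together with the $2^{\ell(\ell-1)/2}$ in $\Clam_\ell$ cancels the remaining constants and leaves leading term $2^N x^N$. For $\Hlamof{n}$ the argument is identical with one extra column: the hypothesis $n \in \Ilam$ guarantees $n-N+\ell \ge 0$ and, via $\pilamof{\ell}(n) \ne 0$, that $n-N+\ell$ differs from every $m_i$, so the distinctness assumption is preserved. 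The new Vandermonde factor splits into the same $\prod_{p<q}(m_p - m_q)$ times
\begin{equation*}
  \prod_{j=1}^\ell (n - N + \ell - m_j) = \prod_{j=1}^\ell (n - N - \lambda_j + j) = \pilamof{\ell}(n),
\end{equation*}
the product of leading coefficients becomes $2^{n + \ell(\ell+1)/2}$, and the $x$-exponent reduces to $n$; dividing by $2^\ell \Clam_\ell \pilamof{\ell}(n)$ again yields $2^n x^n$.

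I expect the only real care point to be the ordering bookkeeping in the third step: one must verify that the permutation implicit in writing the Wronskian arguments as $H_{m_\ell}, \dots, H_{m_1}$ in decreasing order of subscript lines up with the product over $i<j$ in $\Clam_\ell$ with no residual sign. Everything else reduces to a Vandermonde evaluation and a routine leading-term identity for Wronskians of polynomials of distinct degrees.
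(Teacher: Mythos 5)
Your proposal is correct and follows essentially the same route as the paper's proof: both reduce the claim to the standard degree and leading-coefficient formulas for a Wronskian of polynomials with pairwise distinct degrees, with the leading coefficient given by the product of the individual leading coefficients times a Vandermonde factor, and then match that factor against $\Clamof{\ell}$ and $\pilamof{\ell}(n)$. The only difference is that you actually derive the Wronskian leading-term formula (via the falling-factorial determinant), whereas the paper asserts it without proof.
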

\begin{proof}
  Let $p_i,\; i=1,\ldots, l$ be a polynomial of degree $d_i$ with
  leading coefficient $c_i$.  Suppose that $d_1,\ldots, d_l$ are
  distinct and let $P = \WRof{p_1,\ldots, p_l}$.  Then
  \[ \deg P = \sum_{i=1}^l d_i - \frac12 l(l-1) =
    \sum_{i=1}^l (d_i-l+i).\]
  Consequently,
  \begin{align*}
    \deg \etalam 
    &= \sum_{i=1}^\ell (m_i-\ell+i)= \sum_i \lambda_i = N,  \\
    \deg \Hlamof n 
    &= n-N+\ell - (\ell+1) +\ell+1 + \sum_{i=1}^\ell (m_i-\ell-1+i)  = n.  
  \end{align*}

  The leading coefficient of $P$ is given by
  $\prod_{i=1}^k c_i \prod_{1\le i<j\le k} (d_j-d_i)$.  It 
  follows that $\Clamof{\ell}$ and $2^\ell\Clamof{\ell} \pilamof{\ell}(n)$
  are precisely the leading coefficients of $\etalam$ and $\Hlamof{n}$,
  respectively.
\end{proof}

The formulation of $\etalam$ and $\Hlamof{n}$ in \eqref{eq:etalam} and
\eqref{eq:Hlamn} is based on Wronskians of size $\ell$ and $\ell+1$
respectively.  However, it is possible to define these polynomials
using Wronskians of any size $l\ge \ell$.
\begin{proposition}
  \label{prop:shiftprop}
  Let $\lambda$ be a partition of $N$ with length $\ell$. Fix $l\ge
  \ell$, and set
  \[ m_{i,l} = \lambda_i+l - i,\quad i=1,\ldots, l.\]
  Then,
  \begin{align}
    \label{eq:etalaml}
    \etalam
    &=      \Clamof{l}^{-1}\WRof{H_{m_{l,l}},\ldots, H_{m_{1,l}}},\\
    \label{eq:Hlamnl}
    \Hlamof{n}
    &=(2^l\Clam_l\pilamof{l}(n))^{-1}
    \WRof{H_{m_{l,l}},\dots,H_{m_{l,1}},H_{n-N+l}}
  \end{align}
\end{proposition}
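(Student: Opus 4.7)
The plan is to argue by induction on $l \ge \ell$, with the base case $l = \ell$ being exactly the definitions \eqref{eq:etalam} and \eqref{eq:Hlamn}. For the inductive step, I would compare the size-$l$ and size-$(l+1)$ Wronskians and verify that the normalization constants absorb the discrepancy between them.

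Since $l \ge \ell$, we have $\lambda_{l+1} = 0$, which forces $m_{l+1,l+1} = 0$ and $m_{i,l+1} = m_{i,l} + 1$ for $1 \le i \le l$. Hence the size-$(l+1)$ Wronskian in \eqref{eq:etalaml} equals $\WRof{H_0,\,H_{m_{l,l}+1},\,\ldots,\,H_{m_{1,l}+1}}$. The key computational identity is
\[
\WRof{1,\,f_1,\ldots,f_r}=\WRof{f_1',\ldots,f_r'},
\]
obtained by expansion along the first column (whose only nonzero entry sits in the top row). Combined with the derivative formula $H_{n+1}' = 2(n+1)H_n$ applied to each column, this gives
\[
\WRof{H_0, H_{m_{l,l}+1},\ldots, H_{m_{1,l}+1}}=\prod_{i=1}^{l}2(m_{i,l}+1)\cdot \WRof{H_{m_{l,l}},\ldots, H_{m_{1,l}}}.
\]

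The remainder is a bookkeeping of constants. A direct expansion of \eqref{eq:Clamdef}, using $l(l+1)/2 - l(l-1)/2 = l$ and $\lambda_{l+1} = 0$, yields $\Clamof{l+1}/\Clamof{l} = 2^l \prod_{i=1}^{l}(m_{i,l}+1)$, which precisely matches the factor above and establishes \eqref{eq:etalaml}. For \eqref{eq:Hlamnl}, one appends $H_{n-N+l+1}$ to the Wronskian and picks up an additional factor $2(n-N+l+1)$; this is absorbed by the ratio $\pilamof{l+1}(n)/\pilamof{l}(n) = n-N+l+1$ (immediate from \eqref{eq:pilam} since $\lambda_{l+1}=0$) together with the extra factor of $2$ in $2^{l+1}/2^{l}$. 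There is no serious conceptual obstacle here; the entire argument is a careful matching of combinatorial factors, and the only point requiring attention is that the orderings of Hermite indices in the Wronskians are consistent throughout the induction.
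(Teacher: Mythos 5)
Your proof is correct and follows essentially the same route as the paper: induction on $l$, the reduction $\WRof{1,f_1,\ldots,f_r}=\WRof{f_1',\ldots,f_r'}$, and the Hermite derivative formula $H_n'=2nH_{n-1}$. The only (immaterial) difference is in how the constants are matched: you compute the ratio $\Clamof{l+1}/\Clamof{l}$ directly from \eqref{eq:Clamdef}, whereas the paper identifies the leading coefficient of the enlarged Wronskian as $2^N\Clamof{l+1}$ and appeals to the leading-coefficient formula of Proposition \ref{prop:degHlam}; your version is, if anything, slightly more self-contained.
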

\begin{proof}
  The proof is by  induction on $l-\ell$.  If $l=\ell$, there is
  nothing to prove.  Suppose that \eqref{eq:etalaml} holds for a
  particular $l \ge \ell$. Observe that
  \[ m_{i,l+1} = m_{i,l}+1,\quad i=1,\ldots, l,\] with $m_{l+1,l+1}=0$.
  Since
  \[ H_n' = 2n H_{n-1},\quad n=1,2,\ldots, \]
  we have
  \begin{align*}
    \WR(H_{m_{l+1,l+1}}, H_{m_{l+1,l}},\ldots, H_{m_{l+1,1}})
    &=     \WR(1, H_{m_{l,l}+1},\ldots, H_{m_{l,1}+1})\\
    &=     \WR(H_{m_{l,l}+1}',\ldots, H_{m_{l,1}+1}')\\
    &=     2^l \prod_i (m_{l,i}+1)\WR(H_{m_{l,l}},\ldots, H_{m_{l,1}}).
  \end{align*}
  Observe that
  \[ \sum_{i=1}^{l+1} m_{l+1,i} = N + l(l+1)/2,\]
  the leading term coefficient of the
  left-side Wronskian is $2^N     \Clamof{l+1}$
  , it follows that
  \[ \WR(H_{m_{l+1,l+1}}, H_{m_{l+1,l}},\ldots, H_{m_{l+1,1}}) = \Clamof{l+1}
    \etalam.\]
  Relation \eqref{eq:Hlamnl} is proved in an anolgous fashion.
\end{proof}
\begin{corollary}
  \label{cor:hratfunc}
  For every $l\ge \ell$ and  $n \in \Ilam$, we have
  \begin{equation}
    \label{eq:linvar}
    \frac{\Hlamof{n}}{\etalam} =    \frac{    \WRof{H_{m_{l,l}},\ldots,
        H_{m_{1,l}},H_{n-N+l}}}{ 2^l
      \pilamof{l}(n)\WRof{H_{m_{l,l}},\ldots, H_{m_{1,l}}} }   . 
  \end{equation}
\end{corollary}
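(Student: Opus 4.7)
The statement is an immediate consequence of Proposition \ref{prop:shiftprop}, so my plan is simply to form the quotient of the two Wronskian identities \eqref{eq:etalaml} and \eqref{eq:Hlamnl}. First I would invoke \eqref{eq:Hlamnl} to rewrite the numerator as
\[
\Hlamof{n} = \bigl(2^l\Clamof{l}\pilamof{l}(n)\bigr)^{-1}\WRof{H_{m_{l,l}},\ldots, H_{m_{1,l}},H_{n-N+l}},
\]
and \eqref{eq:etalaml} to rewrite the denominator as $\etalam = \Clamof{l}^{-1}\WRof{H_{m_{l,l}},\ldots, H_{m_{1,l}}}$. Taking the ratio, the shared factor $\Clamof{l}$ cancels and one is left precisely with the right-hand side of \eqref{eq:linvar}.

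The only thing worth checking is that the expression on the right is well defined, i.e.\ neither factor in its denominator vanishes. The Wronskian $\WRof{H_{m_{l,l}},\ldots, H_{m_{1,l}}}$ equals $\Clamof{l}\etalam$ by Proposition \ref{prop:shiftprop}, and $\etalam\in\cR$ is a nonzero polynomial by Proposition \ref{prop:degHlam}. For $\pilamof{l}(n)$, note that the extra factors in the product \eqref{eq:pilam} when passing from $\ell$ to $l$ correspond to indices $i>\ell$, for which $\lambda_i=0$; these factors are $n-N+i$, and since $n\in\Ilam$ satisfies $n-N+\ell\ge 0$, they are all strictly positive. Hence $\pilamof{l}(n)\neq 0$ is equivalent to $\pilamof{\ell}(n)\neq 0$, which is part of the definition of $\Ilam$. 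There is no genuine obstacle: all the substantive work was carried out in Proposition \ref{prop:shiftprop}, and the corollary is a formal consequence.
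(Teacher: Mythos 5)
Your proposal is correct and matches the paper's (implicit) argument: the corollary is stated without proof precisely because it is the quotient of \eqref{eq:Hlamnl} by \eqref{eq:etalaml}, with the constant $\Clamof{l}$ cancelling. Your additional check that $\pilamof{l}(n)\neq 0$ for $n\in\Ilam$ (the extra factors $n-N+i$ with $i>\ell$ being strictly positive) is a sound and worthwhile bit of diligence that the paper leaves tacit.
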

In other words, the rational function $\etalam^{-1}\Hlamof{n}$ can
be defined without invoking the normalizing constant $\Clamof{l}$ of
\eqref{eq:Clamdef}.

A particularly interesting case of this formulation occurs when we
take $l=N$.  By \eqref{eq:Hlamndeg}, the set $\Ilam$ is simply the
degree sequence of the polynomial family $\{\Hlamof{n}\}_n$.
Let
\begin{align}
  \label{eq:kidef}
  k_i &:= m_{i,N} =  \lambda_i + N - i,\quad i=1,\ldots, N,\\
  \Klam &:= \{ k_1,\ldots, k_N \};
\end{align}
and observe that
\begin{equation}
  \label{eq:Klam}
    \Klam=\{ 0,1,\ldots, N-\ell-1 \} \cup  \{ m_i + N-\ell \colon
    i=1,\ldots, \ell\}.  
\end{equation}
Recall that $n-N+\ell\in \{ m_{1}, \ldots, m_{\ell} \}$ if and only if
$\pilamof{\ell}(n) =0$.  It follows that $\Klam = \Nz \setminus \Ilam$
is the set of exceptional degrees missing from $\Ilam$, and that there
are precisely $N=\deg \etalam$ such ``exceptional'' degrees.
Moreover, by \eqref{eq:etalaml} and \eqref{eq:Hlamnl}, we have
\begin{equation}
  \label{eq:Wrki}
\begin{aligned}
 \etalam &\propto \WRof{H_{k_N},\ldots, H_{k_1}} ,\\
   \Hlamof{n}&\propto \WRof{H_{k_N},\ldots, H_{k_1},n} ,\quad n\in \Ilam.
\end{aligned}
\end{equation}
Thus both $\etalam$ and the exceptional polynomials $\Hlamof{n}$ can
be formulated as Wronskians involving the exceptional degrees.  Of
course, for most partitions $N>\ell$ and so the formulations in
\eqref{eq:etalam} and \eqref{eq:Hlamn} are more economical.  See
\cite{GGM18} for a characterization of the smallest determinant that
can be used to represent a Wronskian of Hermite polynomials.

Just like their classical counterparts, exceptional Hermite
polynomials satisfy a second-order eigenvalue relation.  Set
\begin{equation}
  \label{eq:taulamdef}
  \taulam y :=   y''-2\lp x+\frac{\etalam'}{\etalam}\rp y'+ \lp \frac{\etalam''}{\etalam}+ 2 x
  \frac{\etalam'}{\etalam}\rp  y;
\end{equation}
i.e., $\taulam$ is the second-order differential expression in the
left-side of \eqref{eq:tauform} with $\eta=\etalam$.  

\begin{proposition}
  \label{prop:chiWr}
  Let $m_1,\ldots, m_l, m\in \Nz$ be distinct non-zero integers.  Set
  \begin{align*}
    \eta &= \WRof{H_{m_l},\ldots, H_{m_1}} \\
    \xi &= \WRof{H_{m_l},\ldots, H_{m_1},H_m} 
  \end{align*}
  Then,
  \begin{equation}
    \label{eq:chietxi}
    \chi(\eta,\xi) = 2(l-m) \eta \xi.
  \end{equation}
\end{proposition}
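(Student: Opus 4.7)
The plan is to reformulate the claim as an eigenvalue equation and induct on $l$. By definition \eqref{eq:taudef}, the identity $\chiof{\eta,\xi}=2(l-m)\eta\xi$ is equivalent to $\tau_\eta\xi=2(l-m)\xi$, and I would prove this eigenvalue form. The base case $l=0$ has $\eta=1$ (empty Wronskian) and $\xi=H_m$, so the claim reduces to the classical Hermite equation \eqref{eq:hermde}: $H_m''-2xH_m'=-2mH_m$.

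For the inductive step, fix distinct indices $m_1,\dots,m_{l+1}$ and an extra index $m\notin\{m_1,\dots,m_{l+1}\}$, and set
\[
\eta_l:=\WRof{H_{m_l},\dots,H_{m_1}},\quad \zeta:=\WRof{H_{m_l},\dots,H_{m_1},H_{m_{l+1}}},\quad \xi:=\WRof{H_{m_l},\dots,H_{m_1},H_m}.
\]
Applying the induction hypothesis twice (with extra indices $m_{l+1}$ and $m$) gives $\chiof{\eta_l,\zeta}=2(l-m_{l+1})\eta_l\zeta$ and $\chiof{\eta_l,\xi}=2(l-m)\eta_l\xi$. The first relation lets me invoke Lemma \ref{lem:taufac} with $\eival_0=2(l-m_{l+1})$, producing the formal factorizations $\tau_{\eta_l}=\beta_{\eta_l,\zeta}\alpha_{\zeta,\eta_l}+\eival_0$ and $\tau_{\zeta}=\alpha_{\zeta,\eta_l}\beta_{\eta_l,\zeta}+\eival_0+2$. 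Setting $\hat\xi:=\alpha_{\zeta,\eta_l}\xi=\eta_l^{-1}\WRof{\zeta,\xi}$, a short manipulation using both factorizations together with $\tau_{\eta_l}\xi=2(l-m)\xi$ yields $\tau_\zeta\hat\xi=2(l+1-m)\hat\xi$, i.e.\ $\chiof{\zeta,\hat\xi}=2(l+1-m)\zeta\hat\xi$.

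To finish I would identify $\hat\xi$ and $\zeta$ with $\xi_{l+1}:=\WRof{H_{m_{l+1}},H_{m_l},\dots,H_{m_1},H_m}$ and $\eta_{l+1}:=\WRof{H_{m_{l+1}},H_{m_l},\dots,H_{m_1}}$ up to sign, using the Sylvester--Jacobi identity for Wronskians
\[
\WRof{\WRof{f_1,\dots,f_l,g},\WRof{f_1,\dots,f_l,h}}=\WRof{f_1,\dots,f_l}\cdot\WRof{f_1,\dots,f_l,g,h}.
\]
This gives $\WRof{\zeta,\xi}=\eta_l\cdot\WRof{H_{m_l},\dots,H_{m_1},H_{m_{l+1}},H_m}$, so $\hat\xi$ equals the latter Wronskian. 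Moving $H_{m_{l+1}}$ to the leading column requires $l$ adjacent swaps, producing a factor of $(-1)^l$; the same reordering in $\zeta$ produces the same sign, and both signs cancel in $\chiof{\zeta,\hat\xi}$ by bilinearity of $\chi$, delivering $\chiof{\eta_{l+1},\xi_{l+1}}=2(l+1-m)\eta_{l+1}\xi_{l+1}$. The main obstacle is the sign and ordering bookkeeping of Wronskian columns; the substantive ingredients are the formal Darboux factorization of Lemma \ref{lem:taufac} and Sylvester's identity.
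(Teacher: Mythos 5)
Your proposal is correct, but it takes a genuinely different route from the paper. The paper also reduces the claim to the eigenvalue form $(\tau_\eta+2(m-l))\xi=0$, but then proceeds in one shot rather than by induction: it introduces the single $l$-th order intertwiner $\kappa y=\eta^{-1}\WRof{H_{m_l},\ldots,H_{m_1},y}$, notes that $\kappa H_m=\eta^{-1}\xi$ and that $\ker\kappa=\lspan\{H_{m_1},\ldots,H_{m_l}\}$ consists of eigenfunctions of $\tauz$, so the commutator $[\tauz,\kappa]$ annihilates $\ker\kappa$; since that commutator has order $l$ (the order-$(l+1)$ terms cancel because both leading coefficients are constant), it must equal $f\kappa$ for a function $f$, which is read off from the leading symbol as $2l-2(\log\eta)''$. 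This gives the intertwining relation $(\ttaulam-2l)\kappa=\kappa\tauz$ directly, with no Wronskian identities and no induction. Your argument instead builds the result one Darboux step at a time using Lemma \ref{lem:taufac}, and is closer in spirit to the factorization chains the paper develops later (Propositions \ref{prop:fachain} and \ref{prop:safachain}); its cost is that it imports the Jacobi--Sylvester Wronskian identity, which the paper never states or proves, plus the sign bookkeeping you describe (which you handle correctly, since both $\eta$ and $\xi$ pick up the same factor $(-1)^l$ and \eqref{eq:chietxi} is invariant under simultaneously flipping both signs). One minor point you should make explicit: the expressions in Lemma \ref{lem:taufac} involve division by $\eta_l$ and $\zeta$, which for general index sets have real zeros, so the factorization identities should be read as identities of rational differential expressions on the complement of the zero loci; since both sides of \eqref{eq:chietxi} are polynomials, the identity then extends everywhere. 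With that caveat your induction is sound.
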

\begin{proof}
  Relation \eqref{eq:chietxi} is equivalent to
 \begin{equation}
    \label{eq:DEetxi}
    (\tau_\eta+2(m-l)) \xi =0
\end{equation}
  Set
  \[ \ttau_\eta y = y'' -2x y' + 2 (\log \eta)''y.\]
  An elementary calculation shows that
  \[ \tau_\eta (\eta y) =  \eta \ttau_\eta y.\]
  Thus, it suffices to show that
  \begin{equation}
    \label{eq:ttaueigen}
     (\ttau_\eta+2(m-l)) (\eta^{-1} \xi) = 0.
  \end{equation}
  Let
  \[ \tauz y = y''-2x y' \] be the classic Hermite differential
  expression.    Define the $l$th order differential expression
  \[ \kappa y =\frac{ \WRof{H_{m_l},\ldots, H_{m_1}, y} }{
      \WRof{H_{m_l},\ldots, H_{m_1}} }= y^{(l)}-(\log\eta)'
    y^{(l-1)} + \cdots ,\]
  and observe that
  \[    \kappa H_m = \eta^{-1}\xi. \]
 Since
  \[ \tauz H_m = -2m H_m,\]
  it  suffices to show that
  \[ (\ttaulam-2l) \kappa = \kappa \tauz,\] This, in turn, is equivalent to the
  relation
  \[ [\tauz,\kappa] = (2l-2(\log\eta)'' )\kappa.\] By
  construction, $\ker\kappa = \lspan \{H_{m_1},\ldots, H_{m_l}\}$.
  Since these are all eigenfunctions of $\tauz$, the commutator
  $[\tauz,\kappa]$ annihilates $\ker \kappa$.  By inspection,
  $[\tauz, \kappa]$ has order $l$. Hence, $[\tauz,\kappa] =f \kappa$
  for some function $f(x)$.  Observe that
  \[ [\tauz, \kappa]y = (2l-2(\log\eta)'') y^{(l)} +\cdots\]
  Hence, $f=2l-2(\log\eta)''$.  This proves
  \eqref{eq:ttaueigen}, which is equivalent to \eqref{eq:chietxi}.
\end{proof}
\begin{corollary}
  \label{cor:xeigen}
  Let $\lambda$ be a partition and $n\in \Ilam$ an allowed degree. Then,
  \begin{equation}
    \label{eq:chieigen}
    \chiof{\etalam, \Hlamof{n}} = 2(N-n)\etalam    \Hlamof{n}. 
  \end{equation} 
\end{corollary}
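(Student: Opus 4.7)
The plan is to deduce this from Proposition \ref{prop:chiWr} by identifying $\etalam$ and $\Hlamof{n}$ with the appropriate Hermite Wronskians and using the bilinearity of $\chi$. Concretely, I would set
\[ \eta_W := \WRof{H_{m_\ell},\dots,H_{m_1}},\qquad \xi_W := \WRof{H_{m_\ell},\dots,H_{m_1},H_{n-N+\ell}}, \]
so that by the definitions \eqref{eq:etalam} and \eqref{eq:Hlamn},
\[ \etalam = \Clamof{\ell}^{-1}\eta_W,\qquad \Hlamof{n} = (2^\ell \Clamof{\ell} \pilamof{\ell}(n))^{-1}\xi_W. \]
Before applying Proposition \ref{prop:chiWr}, I would check its hypotheses in this setting. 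Since the length of $\lambda$ is $\ell$, we have $\lambda_\ell > 0$ and hence $m_i = \lambda_i + \ell - i > 0$ for $i = 1,\dots,\ell$, and these are strictly decreasing, so distinct. The condition $n \in \Ilam$ ensures $n-N+\ell \ge 0$ and $\pilamof{\ell}(n) \neq 0$, the latter being exactly the assertion that $n-N+\ell \notin \{m_1,\dots,m_\ell\}$. Thus the tuple $(m_1,\dots,m_\ell, n-N+\ell)$ consists of distinct non-negative integers as required.

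Applying Proposition \ref{prop:chiWr} with $l = \ell$ and $m = n-N+\ell$ then yields
\[ \chiof{\eta_W,\xi_W} = 2(\ell - (n-N+\ell))\,\eta_W\xi_W = 2(N-n)\,\eta_W\xi_W. \]
Since $\chiof{\cdot,\cdot}$ is bilinear in its arguments (immediate from \eqref{eq:chider}), rescaling both entries by the same pair of constants that normalizes $\eta_W$ into $\etalam$ and $\xi_W$ into $\Hlamof{n}$ affects both sides of this equality by exactly the same factor $\Clamof{\ell}^{-1}(2^\ell \Clamof{\ell}\pilamof{\ell}(n))^{-1}$, giving the desired identity \eqref{eq:chieigen}.

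There is essentially no obstacle here; the content lies entirely in Proposition \ref{prop:chiWr}, and the only bookkeeping step is the verification that the distinctness hypothesis follows from $n \in \Ilam$. The edge case of the empty partition ($\ell = N = 0$, $\etalam = 1$, $\Hlamof{n} = H_n$) reduces \eqref{eq:chieigen} to $\chiof{1,H_n} = -2n H_n$, which is just the classical Hermite equation $H_n'' - 2xH_n' = -2nH_n$, and is consistent with the general formula.
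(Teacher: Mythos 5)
Your proof is correct and follows essentially the same route as the paper: apply Proposition \ref{prop:chiWr} together with the bilinearity of $\chi$. The only cosmetic difference is that you use the size-$\ell$ Wronskian representation coming directly from the definitions \eqref{eq:etalam}--\eqref{eq:Hlamn} (so $l=\ell$, $m=n-N+\ell$), whereas the paper invokes the size-$N$ representation \eqref{eq:Wrki} (so $l=N$, $m=n$); both give the same coefficient $2(N-n)$.
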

\begin{proof}
  Since $\chi$ is bilinear, this follows by \eqref{eq:Wrki} and the
  preceding Proposition.
\end{proof}


The following converse was
proved in \cite{GGM14}.
\begin{proposition}
  Let $\eta(x)$ be a non-zero polynomial. Let
  $\{ y_n  \colon \deg y = n \}_{n\in I}$ be the family of
  polynomial solutions of
  \[ \chi(\eta, y) = \eival \eta y\quad \eival \in \C. \] Suppose that
  $\Nz\setminus I$ is finite; i.e., the family of polynomial solutions
  is missing at most finitely many degrees.  Then, there exists a
  partition $\lambda$ such that $I= \Ilam$, and such that, up to a
  multiplicative constant, $\eta = \etalam$ and
  $y_n = \Hlamof n,\; n\in \Ilam$.
\end{proposition}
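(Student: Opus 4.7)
The approach is to show that the given $\eta$ must be a Wronskian of classical Hermite polynomials, then read off the partition $\lambda$. First I would pin down the eigenvalues: since $\chiof{\eta,y} = \eival\eta y$ is equivalent to $\taueta y = \eival y$, i.e.\ the ODE \eqref{eq:tauform}, matching the coefficient of $x^n$ on both sides for a polynomial $y_n$ of degree $n$ with $N := \deg\eta$ forces $\eival_n = 2(N-n)$. The hypothesis that $\Nz\setminus I$ is finite therefore means $\taueta$ has polynomial eigenfunctions at all but finitely many values of the arithmetic progression $\{2(N-n) : n\in\Nz\}$.

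Next I would pass to the Schr\"odinger picture and run a Darboux--Crum reduction. Conjugating by $\hxi := \eta^{-1}e^{-x^2/2}$ (cf.\ \eqref{eq:heta}), the SLP \eqref{eq:WetaSL} becomes $-u'' + V_\eta u = \mu u$ with $V_\eta = x^2 - 1 - 2(\log\eta)''$, a rational extension of the harmonic-oscillator potential, and the $y_n$ correspond to honest $\rL^2$ eigenfunctions at a cofinite subset of the (shifted) oscillator spectrum. Using the finitely many missing eigenvalues and associated quasi-polynomial seed solutions --- of the form (polynomial)$\,\cdot e^{\pm x^2/2}\eta^{-1}$ --- one inverts the Darboux--Crum chain, peeling off the missing levels one at a time until one reaches the bare oscillator $L_0 = -D^2 + x^2 - 1$. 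By Crum's determinantal identity,
\[
\eta \;\propto\; \WRof{H_{k_1},\ldots,H_{k_N}}
\]
for some Hermite indices $k_1,\ldots,k_N\in\Nz$, matching the representation \eqref{eq:Wrki}. The non-vanishing of $\eta$ on $\R$ then constrains $\{k_i\}$ (via Krein--Adler conditions) to the form \eqref{eq:Klam}, uniquely producing a partition $\lambda$ with $\eta\propto\etalam$ and $I = \Ilam$.

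Finally, to match eigenfunctions, Corollary \ref{cor:xeigen} gives $\chiof{\etalam,\Hlamof{n}} = 2(N-n)\etalam\Hlamof{n}$ for each $n\in\Ilam$, so $\Hlamof{n}$ satisfies the same eigenvalue relation as $y_n$; since the second linearly independent solution of $\taueta y = 2(N-n)y$ grows like $e^{x^2}$ at $\pm\infty$, the polynomial eigenspace at each eigenvalue is one-dimensional and $y_n\propto\Hlamof{n}$. The main obstacle is the Darboux--Crum inversion: at each stage one must produce a seed quasi-polynomial solution at a missing eigenvalue and verify that the resulting chain of seeds corresponds to an admissible Hermite index set --- in other words, one must invoke (or reprove) the Krein--Adler classification of Wronskians of Hermite polynomials with no real zeros. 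A self-contained alternative is a local Frobenius analysis at each complex zero of $\eta$, showing that the indicial exponents compatible with an infinite, cofinite family of polynomial eigenfunctions rigidly force Hermite-Wronskian structure. Either route is the essential content of the classification theorem of \cite{GGM14}.
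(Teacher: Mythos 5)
First, a point of comparison: the paper does not prove this proposition at all --- it is stated as a converse ``proved in \cite{GGM14}'' --- so there is no internal argument to measure your sketch against; it can only be judged on its own terms. Your overall strategy (eigenvalues $\eival_n=2(N-n)$ by leading-coefficient matching; realize $\eta$ as a Hermite Wronskian by inverting a Darboux--Crum chain; then one-dimensionality of each polynomial eigenspace) is the standard and correct one, but the sketch has a genuine gap exactly where you locate ``the main obstacle''. The entire content of the proposition is the claim that the finitely many missing degrees supply quasi-polynomial seed functions allowing the chain to be peeled down to $\tauz$, with the seeds turning out to be genuine Hermite polynomials rather than, say, second-kind solutions of the form $e^{x^2}\cdot(\text{polynomial})$. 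Your proposal defers this step to ``the classification theorem of \cite{GGM14}'' --- which is precisely the statement to be proved --- so as written the argument is circular. Naming two possible routes (trivial-monodromy/Frobenius analysis at the complex zeros of $\eta$, or algebraic inversion of the chain) does not discharge the obligation to carry one of them out.

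There is also a misapplication of hypotheses. The proposition assumes only that $\eta$ is a non-zero polynomial, not that $\eta\in\cR$; the conclusion allows an \emph{arbitrary} partition $\lambda$, and the Krein--Adler evenness condition enters only in Section 4, where it is needed to make $W_\eta$ a positive weight. Consequently your passage to ``honest $\rL^2$ eigenfunctions'' is not available (the weight may be singular on $\R$), and invoking Krein--Adler to constrain the indices $\{k_i\}$ would wrongly restrict $\lambda$ to even partitions. In fact no positivity input is needed to extract the partition: any strictly decreasing list $k_1>\cdots>k_N\ge 0$ of Hermite indices determines a partition via $\lambda_i=k_i+i-N$, matching \eqref{eq:Wrki}. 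The argument must remain formal and algebraic throughout. Your closing uniqueness step is essentially fine, though it is cleaner without growth estimates: leading-coefficient matching ties the degree of any polynomial solution to its eigenvalue, so two independent polynomial solutions at one eigenvalue would yield a lower-degree polynomial solution at that same eigenvalue, a contradiction.
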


\subsection{Multi-step  factorization chains.}
In this section, we show that every exceptional operator $T_\lambda$
is connected to the classical
\[ \tauz y = y''-2x y' \] by a finite factorization chain of
exceptional Hermite operators.  We will describe two such chains: one
will connect an exceptional Hermite operator $\taulam$ to $\tauz$ and
the other will connect $\tauz$ to $\taulam$.

Let $\lambda$ be a partition of $N$. Let $k_1,\ldots, k_N\in \Klam$ be
the exceptional degrees as per \eqref{eq:kidef}, and let
$n_1,\ldots, n_l\in \Ilam$ be a list of allowed degrees.
Set
\[ \eta_i = \WRof{ H_{k_N},\ldots,H_{k_1},H_{n_1},\ldots, H_{n_i}},\quad
i=0,1,\ldots, l,\] and introduce the differential expressions
\begin{align*}
  \alpha_i
  &= \alpha_{\eta_{i+1},\eta_i},\quad i=0,\ldots,l-1 ,\\
  \beta_i
  &= \alpha_{\eta_i,\eta_{i+1}}\\
  \tau_i &= \tau_{\eta_i},\quad i=0,\ldots, l.
\end{align*}
In particular, $\tau_0 = \taulam$.
\begin{proposition}
  \label{prop:fachain}
  With the above definitions, we have
  \begin{align}
    \label{eq:chietaii+1}
    \chiof{\eta_i,\eta_{i+1}}
    &= 2(N+i-n_{i+1})\eta_i\eta_{i+1},\quad
    i=0,\ldots, l-1,\\
    \label{eq:fachain}
    \tau_i &= \beta_i \alpha_i + 2(N+i-n_{i+1}),\quad i=0,\ldots, l-1;\\
    \label{eq:fachain+1}
    \tau_{i+1} &= \alpha_i \beta_i + 2(N+i+1-n_{i+1}).
  \end{align}
\end{proposition}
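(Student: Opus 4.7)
The plan is to reduce Proposition \ref{prop:fachain} to two results already at our disposal: Proposition \ref{prop:chiWr}, which evaluates the bilinear bracket $\chi$ on pairs of Hermite Wronskians differing by exactly one entry, and Lemma \ref{lem:taufac}, which converts a $\chi$-eigenvalue relation of the form \eqref{eq:chietaxi} into the two formal factorizations \eqref{eq:taueta1} and \eqref{eq:tauxi1}.

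The first step is to establish the bracket identity \eqref{eq:chietaii+1}. By construction, $\eta_i$ is the Wronskian of the $N+i$ Hermite polynomials with indices $k_N,\ldots,k_1,n_1,\ldots,n_i$, and $\eta_{i+1}$ is obtained from $\eta_i$ by appending $H_{n_{i+1}}$. Since $\Klam$ and $\Ilam$ are complementary in $\Nz$ and the $n_j$ are taken to be distinct allowed degrees, all $N+i+1$ indices involved are distinct non-negative integers. Proposition \ref{prop:chiWr} therefore applies with its $l$ equal to $N+i$ and its $m$ equal to $n_{i+1}$, yielding
\[ \chiof{\eta_i,\eta_{i+1}} = 2(N+i-n_{i+1})\,\eta_i\,\eta_{i+1}, \]
which is exactly \eqref{eq:chietaii+1}.

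The second step is to invoke Lemma \ref{lem:taufac} under the substitution $\eta\mapsto\eta_i$, $\xi\mapsto\eta_{i+1}$, and $\eival_0\mapsto 2(N+i-n_{i+1})$. The hypothesis \eqref{eq:chietaxi} of the lemma is precisely the identity just established. Under the notational dictionary $\alpha_{\eta_{i+1},\eta_i}=\alpha_i$ and $\beta_{\eta_i,\eta_{i+1}}=\beta_i$, the two conclusions \eqref{eq:taueta1} and \eqref{eq:tauxi1} of the lemma translate directly into \eqref{eq:fachain} and \eqref{eq:fachain+1}, noting that the $+2$ shift in \eqref{eq:tauxi1} gives the corrected constant $2(N+i-n_{i+1})+2 = 2(N+i+1-n_{i+1})$.

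No serious obstacle is anticipated; the argument is essentially a translation between the intrinsic Wronskian construction of the sequence $\{\eta_i\}$ and the abstract hypotheses of the earlier results. The only point requiring a moment of care is bookkeeping the indices — verifying distinctness at each step of the chain so that Proposition \ref{prop:chiWr} applies cleanly, and confirming that the spectral shift in Lemma \ref{lem:taufac} reproduces the stated constants.
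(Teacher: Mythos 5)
Your proof is correct and follows essentially the same route as the paper's: establish the $\chi$-eigenvalue relation \eqref{eq:chietaii+1} from the Wronskian identity of Proposition \ref{prop:chiWr} (the paper routes this through Corollary \ref{cor:xeigen}, an immediate consequence of that proposition), and then feed it into Lemma \ref{lem:taufac} with $\eta\mapsto\eta_i$, $\xi\mapsto\eta_{i+1}$, $\eival_0\mapsto 2(N+i-n_{i+1})$. Your direct appeal to Proposition \ref{prop:chiWr} with $l=N+i$ and $m=n_{i+1}$ is, if anything, slightly more explicit than the paper's one-line citation, since it records the distinctness of the indices and the count of Wronskian entries that produces the constant $2(N+i-n_{i+1})$.
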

\begin{proof}
  Relation \eqref{eq:chietaii+1} follows by Corollary
  \ref{cor:xeigen}. Relations \eqref{eq:fachain} and
  \eqref{eq:fachain+1} follow by Lemma \ref{lem:taufac}.
\end{proof}

Next, we describe a factorization chain that connects $\tau_{\lambda}$
to $\tauz$ using $\lambda_1$ steps.  Let
$k_1,k_2,\ldots, k_N\in \Klam$ be the exceptional degrees, as per
\eqref{eq:kidef}.  Since $k_1=\lambda_1+N-1$, every $n\ge \lambda_1+N$
is an allowed degree; i.e. an element of $\Ilam$.

\begin{definition}
  We call the smallest $\lambda_1$ elements of $\Ilam$ the
  \emph{sporadic} degrees of $\Ilam$.
\end{definition}
\begin{proposition}
  \label{prop:spdeg}
  An $n\in \Ilam$ is a sporadic degree if and only if there exists an
  exceptional degree $k\in \Klam$ such that $n<k$.
\end{proposition}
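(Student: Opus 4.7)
The plan is to reduce the biconditional to the single observation that $k_1 = \lambda_1 + N - 1$ is the maximum element of $\Klam$. Since the $k_i$ are strictly decreasing in $i$ (because $\lambda$ is non-increasing and $k_i = \lambda_i + N - i$), we have $k_1 = \max \Klam$. Moreover, since $k_1 = \lambda_1 + N - 1$, every $n \ge \lambda_1 + N$ lies in $\Ilam$, so all exceptional degrees are contained in $\{0, 1, \ldots, k_1\}$.

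Next, I would count. The interval $\{0, 1, \ldots, k_1\}$ contains $k_1+1 = \lambda_1 + N$ integers, and exactly $N$ of them belong to $\Klam$ (all of them, by the previous paragraph). Hence
\[ |\Ilam \cap \{0,1,\ldots,k_1\}| = \lambda_1 + N - N = \lambda_1. \]
Any element of $\Ilam$ exceeding $k_1$ is strictly larger than all $\lambda_1$ of these, so these are precisely the $\lambda_1$ smallest elements of $\Ilam$; that is, they are the sporadic degrees.

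Finally I would prove the biconditional. For the forward direction, if $n \in \Ilam$ is sporadic, then $n \le k_1$ by the count above, and $n \ne k_1$ since $k_1 \in \Klam$; hence $n < k_1$ with $k_1 \in \Klam$. For the reverse direction, if $n \in \Ilam$ and there is some $k \in \Klam$ with $n < k$, then $n < k \le k_1$, so $n$ belongs to $\Ilam \cap \{0, \ldots, k_1\}$, and is therefore sporadic.

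There is no real obstacle here; the only thing to be careful about is verifying the formula $k_1 = \max \Klam$ directly from the expression \eqref{eq:Klam} (since $m_1 + N - \ell = \lambda_1 + N - 1$ dominates both the block $\{0,\ldots, N-\ell-1\}$ and the other entries $m_i + N - \ell$ for $i \ge 2$), and the arithmetic $|\Ilam \cap [0,k_1]| = \lambda_1$.
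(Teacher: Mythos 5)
Your proof is correct and follows essentially the same route as the paper's: both arguments rest on the observation that all $N$ exceptional degrees lie in $\{0,1,\ldots,\lambda_1+N-1\}$ with $k_1=\lambda_1+N-1$ the largest, so that exactly $\lambda_1$ elements of $\Ilam$ lie below $k_1$ and these are precisely the sporadic degrees. Your write-up just spells out the counting and the two directions of the biconditional a bit more explicitly than the paper does.
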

\begin{proof}
  By \eqref{eq:kidef}, the $N$ exceptional degrees are contained in
  $\{ 0,1,\ldots, \lambda_1+N-1\}$, a set of cardinality
  $\lambda_1+N$. Hence, $n\in \Ilam$ is dominated by some exceptional
  degree if and only if $n< \lambda_1+N-1$.  By the preceding remark,
  there are precisely $\lambda_1$ such degrees.
\end{proof}

\begin{proposition}
  \label{prop:lamj}
  Let $\lambda$ be a partition of $N$, and let
  $n_1<n_2<\ldots$ be the allowed degrees in $\Ilam$ 
  listed in increasing order.  For $j\in \Nz$, set
  \begin{align}
    \label{eq:lamjdef}
    \lami{j}_i &= \max(\lambda_i-j,0)\quad i\in \N,
  \end{align}
  and let $\ell_j$ be the length of $\lami{j}$.  Then,
  \begin{equation}
    \label{eq:elljnj}
     \ell_j = N-n_{j+1}+j,\quad j\in \Nz.
  \end{equation}
\end{proposition}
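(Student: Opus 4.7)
The plan is to prove the equivalent formulation $n_{j+1} = N + j - \ell_j$ by directly identifying the $(j+1)$st smallest element of $\Ilam$ via counting. Set $M_j := N + j - \ell_j$. I will show (i) $M_j \notin \Klam$, and (ii) $\#(\Klam \cap [0, M_j - 1]) = N - \ell_j$; together these give $\#(\Ilam \cap [0, M_j]) = (M_j+1) - (N - \ell_j) = j+1$, so $M_j = n_{j+1}$.

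The concrete tool is the length-$N$ Wronskian description from Proposition \ref{prop:shiftprop}: padding $\lambda$ with zeros past position $\ell$, one has $\Klam = \{k_i : i = 1, \ldots, N\}$ with $k_i = \lambda_i + N - i$. The sequence is strictly decreasing because $k_i - k_{i+1} = (\lambda_i - \lambda_{i+1}) + 1 \ge 1$, and $\lambda_i \ge j+1$ precisely when $i \le \ell_j$ while $\lambda_i \le j$ precisely when $i > \ell_j$.

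For the count in (ii), $k_i < M_j$ is equivalent to $\lambda_i < j + i - \ell_j$. If $i \le \ell_j$ then $j + i - \ell_j \le j < \lambda_i$, so the inequality fails. If $i > \ell_j$ then $j + i - \ell_j \ge j+1 > \lambda_i$, so it holds. Thus exactly the indices $i = \ell_j+1, \ldots, N$ contribute, giving $\#(\Klam \cap [0, M_j - 1]) = N - \ell_j$. For (i), $k_i = M_j$ would demand $\lambda_i = j + i - \ell_j$; for $i \le \ell_j$ this forces $\lambda_i \le j$, contradicting $\lambda_i \ge j+1$, while for $i > \ell_j$ it forces $\lambda_i \ge j+1$, contradicting $\lambda_i \le j$. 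Hence $M_j \in \Ilam$, and rearranging $M_j = n_{j+1}$ yields $\ell_j = N - n_{j+1} + j$.

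There is no serious obstacle here: the proof is a bookkeeping argument built entirely on the explicit formula $k_i = \lambda_i + N - i$. The one step that most deserves care is choosing the length-$N$ padding (rather than length $\ell$) so that the case split $i \le \ell_j$ versus $i > \ell_j$ simultaneously handles both the counting of exceptional degrees below $M_j$ and the verification that $M_j$ itself avoids $\Klam$.
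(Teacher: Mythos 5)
Your proof is correct, and it reaches the conclusion by a genuinely different route than the paper, even though both arguments ultimately rest on the same two ingredients: the explicit formula $k_i=\lambda_i+N-i$ for the exceptional degrees and the dichotomy $i\le \ell_j$ if and only if $\lambda_i\ge j+1$. The paper's proof is global: it observes that $(\ell_0,\ell_1,\ldots)$ is the partition conjugate to $\lambda$, shows $\lambda_i-i\ne j-\ell_j$ for all $i,j$, and concludes by a cardinality count that $\{\,j-\ell_j+N : j=0,\ldots,\lambda_1-1\,\}$ is exactly the complement of $\Klam$ in $\{0,\ldots,\lambda_1+N-1\}$, i.e.\ the set of sporadic degrees; to read off that the $(j+1)$st smallest of these is $n_{j+1}$ it implicitly also uses that $j\mapsto j-\ell_j$ is strictly increasing, and it disposes of $j\ge\lambda_1$ by a separate remark. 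You instead fix $j$, set $M_j=N+j-\ell_j$, and compute the rank of $M_j$ in $\Ilam$ directly by counting the exceptional degrees below it, using only that the $k_i$ are pairwise distinct. This avoids both the disjoint-union identity and the monotonicity step, and it treats all $j\in\Nz$ uniformly. What the paper's route buys is the cleaner structural statement (the conjugate-partition, or Maya-diagram, duality in one shot); what yours buys is a self-contained, purely local verification. The only point worth an explicit sentence in your write-up is the trivial one that $\ell_j\le\ell\le N$, so the case split over $i=1,\ldots,N$ is exhaustive and $M_j\ge N-\ell_j\ge 0$, guaranteeing $M_j\in\Nz$ so that membership in $\Ilam$ makes sense.
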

\begin{proof}
  Observe that if
  $j\geq \lambda_1$, then $\lami{j}= \emptyset$ is the trivial
  partition.  Thus it suffices to establish  \eqref{eq:elljnj} for
  $j=0,\ldots, \lambda_1-1$.
  
  By definition, $i\leq \ell_{j}$ if and only if $\lambda_i-j\ge 1$.
  In other words, the sequence $(\ell_0,\ell_1,\ell_2,\ldots)$ and $\lambda$
  are dual partitions.  Observe  that if $\lambda_i-j \ge 1$, then
  $i-\ell_j\le 0$.  Conversely, $\lambda_i-j\le 0$ if and only if
  $i-\ell_j\ge 1$.  Hence,
  \[ \lambda_i-i \neq j-\ell_j,\quad i\in \N,\; j\in \Nz.\] Using
  $\sqcup$ for disjoint union, it follows that
  \[ \{\lambda_i-i\colon i=1,\ldots, N \} \sqcup \{ j-\ell_j \colon
    j=0,\ldots, \lambda_1-1 \} = \{ -N,-N+1,\ldots, \lambda_1-1 \}.\]
  Therefore, \[ \{ j-\ell_j+N \colon j=0,\ldots, \lambda_1-1\}\] is
  precisely the set of sporadic degrees in $\Ilam$. Relation
  \eqref{eq:elljnj} follows.
\end{proof}

\begin{proposition}
  \label{prop:safachain}
  Let $\lambda$ be a partition, with $\lami{j},\; j\in \Nz$ the
  sequence of partitions defined by \eqref{eq:lamjdef}.  Define the
  differential expressions
  \begin{align*}
    \alpha_j &= \alpha_{\eta_{\lami{j+1}},\eta_{\lami{j}}},\quad j=0,\ldots,
    \lambda_1-1,\\
    \beta_j &= \alpha_{\eta_{\lami{j}},\eta_{\lami{j+1}}};\\
    \tau_j &= \tau_{\eta_{\lami{j}}},\quad j=0,\ldots, \lambda_1.
  \end{align*}
  Then,
  \begin{align}
    \label{eq:fachain1}
    \tau_j &= \beta_j \alpha_j + 2\ell_j,\quad j=0,\ldots,\lambda_1-1;\\
    \tau_{j+1} &= \alpha_j \beta_j + 2(\ell_j+1).
  \end{align}
\end{proposition}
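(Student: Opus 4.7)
The plan is to reduce the claim to Lemma~\ref{lem:taufac} by establishing the eigenvalue identity
\[\chiof{\eta_{\lami{j}},\eta_{\lami{j+1}}}=2\ell_j\,\eta_{\lami{j}}\,\eta_{\lami{j+1}}.\]
Once this is in hand, Lemma~\ref{lem:taufac} applied with $\eival_0=2\ell_j$ immediately delivers both $\tau_j=\beta_j\alpha_j+2\ell_j$ and $\tau_{j+1}=\alpha_j\beta_j+2(\ell_j+1)$, which are the two factorizations claimed.

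The key observation is that $\eta_{\lami{j+1}}$ is, up to a nonzero constant, the lowest-degree exceptional Hermite polynomial associated to the partition $\lami{j}$. Setting $N_j:=|\lami{j}|$, the relation $\ell_j=N-n_{j+1}+j$ of Proposition~\ref{prop:lamj} applied at the first index of the sequence attached to $\lami{j}$ itself shows that the minimal element of $I_{\lami{j}}$ equals $n_\ast:=N_j-\ell_j$. Since $n_\ast-N_j+\ell_j=0$, the Wronskian formula \eqref{eq:Hlamn} gives
\[H_{\lami{j},n_\ast}\propto\WRof{H_{m_{\ell_j,\ell_j}},\ldots,H_{m_{1,\ell_j}},H_0},\]
where $m_{i,\ell_j}=\lami{j}_i+\ell_j-i$. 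On the other hand, applying Proposition~\ref{prop:shiftprop} to $\lami{j+1}$ with shift $l=\ell_j+1$ represents $\eta_{\lami{j+1}}$ as a Wronskian of size $\ell_j+1$ whose indices $m_{i,\ell_j+1}=\lami{j+1}_i+\ell_j+1-i$ coincide with the $m_{i,\ell_j}$ above for $i=1,\ldots,\ell_j$, while the final slot $i=\ell_j+1$ contributes $H_0$. Hence $\eta_{\lami{j+1}}$ is a scalar multiple of $H_{\lami{j},n_\ast}$.

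With this identification, Corollary~\ref{cor:xeigen} applied to the partition $\lami{j}$ at $n=n_\ast$ yields
\[\chiof{\eta_{\lami{j}},H_{\lami{j},n_\ast}}=2(N_j-n_\ast)\,\eta_{\lami{j}}\,H_{\lami{j},n_\ast}=2\ell_j\,\eta_{\lami{j}}\,H_{\lami{j},n_\ast},\]
and the bilinearity of $\chi$ transports this into the desired identity for $\eta_{\lami{j+1}}$. Lemma~\ref{lem:taufac} then closes the argument.

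The main technical obstacle is the bookkeeping needed to match up the two shifted Wronskians. The case analysis splits naturally into three parts: indices $i\le\ell_{j+1}$ (where $\lami{j}_i=\lami{j+1}_i+1$, so the two index conventions yield the same $m_{i,\cdot}$), indices $\ell_{j+1}<i\le\ell_j$ (where $\lami{j}_i=1$ and $\lami{j+1}_i=0$, so the conventions again agree), and the extra slot $i=\ell_j+1$ which produces the $H_0$ factor. A minor edge case arises at $j=\lambda_1-1$, when $\lami{j+1}$ is empty, but $\eta_\emptyset$ is a nonzero constant and the shifted-Wronskian representation continues to apply there.
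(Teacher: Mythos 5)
Your argument is correct, but it reaches the key identity $\chiof{\eta_{\lami{j}},\eta_{\lami{j+1}}}=2\ell_j\,\eta_{\lami{j}}\eta_{\lami{j+1}}$ by a different decomposition than the paper. The paper's proof stays inside the original partition $\lambda$: it realizes $\eta_{\lami{j}}$ as the $(N+j)$-row Wronskian on the index set $\Klam\cup\{n_1,\ldots,n_j\}$ (exceptional degrees plus the first $j$ sporadic degrees), so that Proposition~\ref{prop:fachain} applies verbatim with constant $2(N+j-n_{j+1})$, which is then converted to $2\ell_j$ via the duality relation \eqref{eq:elljnj}; notably, the identification of that Wronskian with $\eta_{\lami{j}}$ is asserted rather than checked. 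You instead work intrinsically with the partition $\lami{j}$: you recognize $\eta_{\lami{j+1}}$ as a scalar multiple of the minimal-degree exceptional polynomial $H_{\lami{j},\,N_j-\ell_j}$ by matching the size-$(\ell_j+1)$ shifted Wronskian of Proposition~\ref{prop:shiftprop} against \eqref{eq:Hlamn} (your three-case index check is right, including the extra $H_0$ slot and the empty partition at $j=\lambda_1-1$), and then Corollary~\ref{cor:xeigen} applied to $\lami{j}$ produces the constant $2(N_j-n_\ast)=2\ell_j$ directly, with no detour through sporadic degrees. Both routes terminate in Lemma~\ref{lem:taufac}. What yours buys is that the Wronskian bookkeeping the paper leaves implicit is made explicit and verified; what the paper's buys is brevity, since Propositions~\ref{prop:fachain} and~\ref{prop:lamj} are already in place. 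One cosmetic remark: your final index in the Wronskian is $0$, which formally conflicts with the ``non-zero'' hypothesis of Proposition~\ref{prop:chiWr}, but the paper's own use of \eqref{eq:Wrki} already requires zero indices (e.g.\ $k_N=\lambda_N$ when $\ell<N$) and the proof of that proposition nowhere uses non-vanishing of the indices, so this is a typo in the hypothesis rather than a gap in your argument.
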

\begin{proof}
  Let $n_1<\ldots< n_{\lambda_1}$ be the sporadic degrees of $\lambda$
  listed in increasing order.  Let
  \begin{align*}
     K_j &= \Klam \cup \{ n_1,\ldots, n_j \},\quad j=0,\ldots,
    \lambda_1,\\
    \eta_j &= \WRof{k_N,\ldots, k_1,n_1,\ldots, n_j}.
  \end{align*}
  By \eqref{eq:elljnj} and \eqref{eq:chietaii+1}, 
  \[ \chiof{\eta_j,\eta_{j+1}} = 2(N+j-n_{j+1})\eta_j\eta_{j+1} =
    2\ell_j \eta_j \eta_{j+1}.\] The desired conclusion now follows by
  Proposition \ref{prop:fachain}.
\end{proof}

For example, consider the partition $\lambda = (3,3,1,1,0,\ldots)$ of
$N=8$.  The exceptional degrees are
\[ \Klam = \{ 0,1,2,3,5,6,9,10 \}.\]
The degree set is therefore,
\[ \Ilam = \{ 4,7,8,11,12,13,\ldots \},\]
with $4,7,8$ being the sporadic degrees.  In terms of the above terminology,
\begin{align*}
  K_1 &= \{ 0,1,2,3,4,5,6,9,10 \}, &\lami{1} &= (
                                               2,2,0,0,0,\ldots),\quad
  N_1 = 4,\; \ell_1 = 2\\
  K_2 &= \{ 0,1,2,3,4,5,6,7,9,10 \}, &\lami{2} &= ( 1,1,0,0,0,\ldots),
  \quad N_2 = 2,\; \ell_2 = 2\\
  K_3 &= \{ 0,1,2,3,4,5,6,7,8,9,10 \}, &\lami{3}  &=(0,0,0,0,0,\ldots),
\quad N_3= 0,\; \ell_3 = 0.                                                   
\end{align*}

\subsection{The norm identity.}
In this section, we present and prove a certain algebraic identity
that will allow us to derive a formula for the norming constants of
the exceptional Hermite polynomials.

\begin{proposition}
  Let $m_1,\ldots, m_l,m\in \Nz$ be distinct non-negative integers.
  Set
  \begin{align*}
    \eta_0 &= 1\\
    \xi_{0}&= H_m\\
    \eta_i &=  \WRof{H_{m_1},\ldots, H_{m_i}},\quad i=1,\ldots, l;\\
    \xi_{i} &=  \WRof{H_{m_1},\ldots, H_{m_i},H_m}
  \end{align*}
  Also, let $\rho_0 =0$, and recursively define
  \[ \rho_{i+1} = 
    \frac{ \xi_{i} \xi_{i+1}}{\eta_i\eta_{i+1}}
    + 2 (m-m_{i+1})\rho_{i},\quad i=0,\ldots, l-1.
  \]
  We then have,
  \begin{equation}
    \label{eq:normid}
     \lp \frac{\xi_l}{\eta_l}\rp^2e^{-x^2} - 2^{l} \prod_{i=1}^l
    (m-m_i) \xi_0^2 e^{-x^2} = \lp \rho_{l}\, e^{-x^2}\rp
    '.
  \end{equation}
\end{proposition}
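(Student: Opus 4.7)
The plan is to proceed by induction on $l$. The base case $l=0$ is immediate: $\rho_0 = 0$ makes the right-hand side zero, while $\eta_0 = 1$ and the empty product in the second term on the left force $\xi_0^2 e^{-x^2} - \xi_0^2 e^{-x^2} = 0$. For the inductive step, set $R_l := \rho_l e^{-x^2}$ and let $\Phi_l$ denote the left-hand side of \eqref{eq:normid}, so that the claim is $R_l' = \Phi_l$. The recursive definition of $\rho_{l+1}$ gives
\[
R_{l+1}' = \lp \frac{\xi_l \xi_{l+1}}{\eta_l \eta_{l+1}}\, e^{-x^2}\rp' + 2(m - m_{l+1})\, R_l',
\]
and under the induction hypothesis $R_l' = \Phi_l$, the $\xi_0^2 e^{-x^2}$ contributions combine into $-2^{l+1}\prod_{i=1}^{l+1}(m-m_i)\,\xi_0^2 e^{-x^2}$, exactly as demanded by $\Phi_{l+1}$. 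The inductive step thus reduces to the pointwise identity
\[
v^2 - 2(m - m_{l+1})\, u^2 = (uv)' - 2x\, uv,
\]
where $u := \xi_l/\eta_l$ and $v := \xi_{l+1}/\eta_{l+1}$.

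To establish this pointwise identity, set $w := \eta_{l+1}/\eta_l$. A Sylvester--Crum-type identity for consecutive Wronskians yields $\xi_{l+1}/\eta_l = \WRof{w, u}$, so that $v = u' - (w'/w)\, u$. By Proposition \ref{prop:chiWr} applied separately to $(\eta_l, \xi_l)$ and to $(\eta_l, \eta_{l+1})$, both $u$ and $w$ are eigenfunctions of the common gauge-transformed operator $\ttau_{\eta_l} y := y'' - 2xy' + 2(\log\eta_l)''\, y$, with eigenvalues $2(l-m)$ and $2(l-m_{l+1})$ respectively. Lagrange's identity for this Schr\"odinger-type operator, which is formally symmetric with weight $e^{-x^2}$, then reads
\[
\lp \WRof{u,w}\, e^{-x^2}\rp' = 2(m - m_{l+1})\, uw\, e^{-x^2}.
\]
A short direct calculation, substituting $v = u' - (w'/w)u$ into $v^2 - (uv)' + 2x\,uv$, collapses it into $(u/w)\bigl[\WRof{u,w}' - 2x\WRof{u,w}\bigr]$; invoking the Lagrange identity above turns this into $2(m - m_{l+1})\, u^2$, which closes the induction.

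The main obstacle is spotting the right structural decomposition: first, the Wronskian-of-quotients identity expressing $v$ as a first-order expression in $u$ and $w$; second, the realization that the resulting pointwise identity is precisely Lagrange's identity for a common Schr\"odinger-type operator, evaluated at two of its eigenfunctions. Once these two structural facts are in place, the remaining manipulations are purely algebraic, and the telescoping structure of the recursion for $\rho_l$ matches the geometric factor $2(m-m_{l+1})$ arising naturally from the spectral gap.
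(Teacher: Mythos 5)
Your proof is correct, and its skeleton is exactly the paper's: induction on $l$, with the recursion for $\rho_l$ telescoping the one-step identity
\[
\frac{\xi_{i+1}^2}{\eta_{i+1}^2}e^{-x^2}-2(m-m_{i+1})\frac{\xi_i^2}{\eta_i^2}e^{-x^2}
=\lp\frac{\xi_i\xi_{i+1}}{\eta_i\eta_{i+1}}e^{-x^2}\rp'.
\]
Where you genuinely differ is in how that one-step identity is established. The paper routes it through the factorization machinery of Section 2: it observes $\alpha_i\xi_i=\xi_{i+1}$ (your Sylvester--Crum step in operator clothing), deduces $\beta_i\xi_{i+1}=-2(m-m_{i+1})\xi_i$ from the chain $\tau_i=\beta_i\alpha_i+2(i-m_{i+1})$ together with $\tau_i\xi_i=2(i-m)\xi_i$, and then reads the identity off the first-order adjointness relation \eqref{eq:albefadj}. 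You instead gauge away $\eta_l$, note that $u=\xi_l/\eta_l$ and $w=\eta_{l+1}/\eta_l$ are eigenfunctions of the single expression $\ttau_{\eta_l}$ with eigenvalues $2(l-m)$ and $2(l-m_{l+1})$ (both instances of \eqref{eq:ttaueigen} from the proof of Proposition \ref{prop:chiWr}), and apply the classical second-order Lagrange identity to the pair $(u,w)$; your algebraic collapse of $v^2-(uv)'+2x\,uv$ to $(u/w)\bigl[\WRof{u,w}'-2x\WRof{u,w}\bigr]$ checks out, as does the Wronskian-of-quotients identity $\xi_{l+1}/\eta_l=\WRof{w,u}$. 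The two derivations are two faces of the same computation --- \eqref{eq:albefadj} is precisely the first-order Lagrange identity for the factored operators --- but yours is self-contained at the level of scalar identities and avoids the $\alpha/\beta$ formalism, at the cost of an extra quotient manipulation that the paper absorbs into $\alpha_i\xi_i=\xi_{i+1}$. Either way the eigenvalue input from Proposition \ref{prop:chiWr} is the essential ingredient, and your bookkeeping of the factor $2(m-m_{l+1})$ and of the $\xi_0^2$ terms in the telescoping is accurate.
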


\begin{proof}
  By \eqref{eq:chietxi}, we have
  \[ \chi(\eta_i,\eta_{i+1}) = 2(i-m_{i+1}) \eta_i \eta_{i+1},\quad
    i=0,\ldots, l-1.\] Hence, by the  argument used in
  Proposition \ref{prop:fachain}, the differential expressions
  \begin{align*}
    \alpha_i &= \alpha_{\eta_{i+1},\eta_i},\quad i=0,\ldots, l-1\\
    \beta_i &= \beta_{\eta_{i},\eta_{i+1}},\quad i=0,\ldots, l-1\\
    \tau_i &= \tau_{\eta_i},\quad i=0,\ldots, l.
  \end{align*}
  constitute the factorization chain
  \begin{align*}
    \tau_i &= \beta_i \alpha_i + 2(i-m_{i+1}),\quad i=0,\ldots, l-1\\
    \tau_{i+1} &= \alpha_i \beta_i + 2(i+1-m_{i+1}).
  \end{align*}
  Next, observe that
  \begin{align*}
    \alpha_i \xi_i
    &= \frac{\WRof{\WRof{H_{m_1},\ldots, H_{m_i},H_{m_{i+1}}},
      \WRof{H_{m_1},\ldots, H_{m_i},H_m}}}{
      \WRof{H_{m_1},\ldots, H_{m_i}}}\\
    &=    \WRof{H_{m_1},\ldots, H_{m_i},H_{m_{i+1}}, H_m} = \xi_{i+1}.
  \end{align*}
  Again, by \eqref{eq:chietxi}, we have
  \[ \tau_i(\xi_i) = 2(i-m)  \xi_i,\quad i=0,\ldots,    l.\]
  Hence, by \eqref{eq:albefadj}, we have
  \begin{align*}
    \frac{\xi_{i+1}^2}{\eta_{i+1}^2}
    e^{-x^2}-2(m-m_{i+1})\frac{\xi_i^2}{\eta_i^2} e^{-x^2} 
    &= \lp\frac{\xi_i \xi_{i+1}}{\eta_i \eta_{i+1}} e^{-x^2}
      \rp',\quad i=0,\ldots, l-1.
  \end{align*}
  Hence, by induction,
  \begin{align*}
    \frac{\xi_{l}^2}{\eta_{l}^2}e^{-x^2} - 2^l \prod_{i=1}^{l}(m-m_i)
    \xi_0^2 e^{-x^2} =\lp e^{-x^2} \sum_{i=1}^l
    \prod_{j=1+i}^l (2m-2m_j) \frac{\xi_j \xi_{j-1}}{\eta_j \eta_{j-1}} \rp'
  \end{align*}
  Again, by induction,
  \[\rho_k =
     \sum_{i=1}^k
    \prod_{j=1+i}^l (2m-2m_j) \frac{\xi_j \xi_{j-1}}{\eta_j
      \eta_{j-1}},\quad k=1,\ldots, \ell-1. \]
  The desired identity \eqref{eq:normid} follows immediately.
\end{proof}
\section{The $\rL^2$ theory}

The following theorem was proved by Krein for Sturm-Liouville problems
on the half-line, and independently by Adler for Sturm-Liouville
problems on a bounded interval. The Adler argument can be extended
without difficulty to the case of an infinite interval \cite{GFGU15}.

\begin{definition}
  We say that $\lambda$ is an even partition 
  $\lambda_{2i-1} = \lambda_{2i}$ for every $i\in \N$.
\end{definition}

\begin{theorem}[Krein-Adler]
  The polynomial $\etalam(x)$ has no real zeros if and only if
  $\lambda$ is an even partition.
\end{theorem}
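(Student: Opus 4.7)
\smallskip
\noindent\textbf{Proof plan.} With $m_i=\lambda_i+\ell-i$ as in \eqref{eq:midef}, the even-partition condition is equivalent to $\ell$ being even together with $m_{2i-1}=m_{2i}+1$ for $i=1,\dots,\ell/2$; equivalently the multiset $M=\{m_1,\dots,m_\ell\}$ is a disjoint union of pairs of consecutive integers, which may further agglomerate into maximal runs of consecutive integers all of even length. Since $\etalam$ differs from $\WRof{H_{m_\ell},\dots,H_{m_1}}$ only by the nonzero factor $\Clamof{\ell}$, it suffices to decide when this Hermite Wronskian has a real zero, which is the original Krein--Adler statement transported to the real line. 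The plan is to analyze this Wronskian through a chain of pairwise Darboux transformations, in the spirit of Section~2, using the Crum factorization
\[
  \WRof{H_{m_1},\ldots,H_{m_{2j}}}=\eta^{(j-1)}\,\WRof{\phi^{(j-1)}_{m_{2j-1}},\phi^{(j-1)}_{m_{2j}}},
\]
with $\eta^{(j-1)}=\WRof{H_{m_1},\dots,H_{m_{2j-2}}}$ and $\phi^{(j-1)}_n=\WRof{H_{m_1},\dots,H_{m_{2j-2}},H_n}/\eta^{(j-1)}$, so that by Proposition~\ref{prop:fachain} and Theorem~\ref{thm:Tetxi} each $\phi^{(j-1)}_n$ is an eigenfunction of the self-adjoint Hermite-type operator $T_{\eta^{(j-1)}}$ with purely discrete spectrum.

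For sufficiency I would induct on the number $\ell/2$ of pairs. At each step, $m_{2j-1}$ and $m_{2j}$ are adjacent integers, so they correspond to consecutive eigenfunctions of $T_{\eta^{(j-1)}}$ in spectral order. The key classical lemma---provable in the harmonic-oscillator case through the identity $\sqrt{2(k+1)}\,\WR(\psi_k,\psi_{k+1})=(\psi_k')^2+(2k+2-x^2)\psi_k^2$, whose strict positivity follows from the integrated form $\psi_k(x)^2-\int_{-\infty}^x 2t\,\psi_k(t)^2\,dt>0$, and extending to the exceptional setting by Sturm interlacing applied to $T_{\eta^{(j-1)}}$---yields that $\WRof{\phi^{(j-1)}_{m_{2j-1}},\phi^{(j-1)}_{m_{2j}}}$ has no real zero. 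Combined with the inductive nonvanishing of $\eta^{(j-1)}$, this produces a nonvanishing $\eta^{(j)}$, and at $j=\ell/2$ one obtains $\etalam$ without real zeros.

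For necessity I argue by contraposition: if $\lambda$ is not even, then $M$ contains at least one maximal run of consecutive integers of odd length. Stripping off all even runs and all extractable consecutive pairs of the odd run by the sufficiency construction leaves a nonsingular intermediate operator $T$ in which a single unpaired index $k$ remains; the corresponding eigenfunction $\phi_k$ is an excited state of $T$ and hence carries at least one real node by Sturm oscillation. By the Crum identity and the inductive nonvanishing of the stripped factors, that node descends to a real zero of $\WRof{H_{m_\ell},\dots,H_{m_1}}$ and hence of $\etalam$.

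The main obstacle is the bookkeeping in the necessity direction: one must verify that the real zero produced by the unpaired eigenfunction is not silently cancelled by a factor further along the chain. The clean resolution is Adler's parity argument, which tracks the parity of the number of real zeros of $\WRof{H_{m_\ell},\dots,H_{m_1}}$ through each nonsingular pairwise step and shows that every maximal odd run contributes an odd (hence strictly positive) count of real zeros while even runs contribute none. The extension of Adler's originally bounded-interval argument to $(-\infty,\infty)$ is the content of \cite{GFGU15}; it requires only that every Hermite-type operator in the chain is limit-point at both endpoints with purely discrete spectrum, which is guaranteed by Proposition~\ref{prop:TBA} and \cite[Theorem 2.4]{CL}, so that no spurious boundary contributions enter the Sturm oscillation counts.
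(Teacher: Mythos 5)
First, a point of reference: the paper does not prove this theorem at all --- it is quoted as a known result of Krein and Adler, with the extension from a bounded interval to $(-\infty,\infty)$ attributed to \cite{GFGU15}. So your proposal is being measured against the literature proof rather than against anything in the paper. Your combinatorial reformulation is correct (evenness of $\lambda$ is equivalent to $m_{2i-1}=m_{2i}+1$ for all $i$, equivalently to $\{m_1,\dots,m_\ell\}$ being a union of maximal runs of consecutive integers, each of even length), the Jacobi--Crum factorization you invoke is the right identity, and the sufficiency half is essentially sound: the oscillator identity for $\WR(\psi_k,\psi_{k+1})$ is correct and settles the base case, every prefix $\{m_1,\dots,m_{2j}\}$ in the greedy top-down ordering is again a union of even runs so each intermediate operator is nonsingular, and the inductive step then needs only the lemma that the Wronskian of two spectrally consecutive eigenfunctions of a nonsingular, limit-point Sturm--Liouville operator on $\R$ never vanishes. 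That lemma is true, but ``by Sturm interlacing'' is not yet a proof of it; it is the one substantive step in the sufficiency direction and must be written out.

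The necessity half contains a genuine gap, which you yourself name and then do not close. If $\{m_1,\dots,m_\ell\}$ has exactly one odd maximal run you can in fact finish cleanly: process all even runs first, then the top $2s$ elements of the odd run (every intermediate set is a union of even runs, hence every intermediate operator is nonsingular and self-adjoint), and the single leftover index $a$ yields an excited state of a genuine self-adjoint operator --- excited because $m_i\ge 1$ for all $i$, so the level $n=0$ always survives below $a$ --- whose node is a real zero of $\etalam$ since the nonvanishing factor $\eta^{(J)}$ cannot cancel it. But with two or more odd runs the stripping leaves at least two isolated, mutually non-adjacent indices, and the remaining steps of the chain necessarily involve either Wronskians of non-consecutive eigenfunctions or singular intermediate Wronskians, where the spectral machinery of Section~2 (which requires $\eta\in\cR$) is unavailable; nothing in your argument rules out the node being cancelled at those stages. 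Your proposed resolution --- ``Adler's parity argument'' as extended to the real line in \cite{GFGU15} --- is precisely the content of the theorem being proved, so invoking it is circular; and if citing \cite{GFGU15} is permitted, the whole theorem can simply be cited, which is what the paper does. To make the proposal self-contained you would have to prove Adler's zero-counting lemma, i.e.\ track the exact number of real zeros of the eigenfunction Wronskian through each Darboux step including the singular ones; that bookkeeping is the actual substance of the necessity direction, not an afterthought.
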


Henceforth we assume that $\lambda$ is an even partition.  In that
case,
\[\Wlam := W_{\etalam} =\etalam^{-2}e^{-x^2}\]
is a non-singular weight on $(-\infty,\infty)$ with finite moments.
Let $\cH_\lambda = \rL^2(\R,W_\lambda)$ denote the corresponding Hilbert
space with weighted inner product,
\[ \la f,g\ra_\lambda = \int_\R f \bg W_\lambda = \int_R
  \frac{f}{\etalam} \frac{\bg}{\etalam} e^{-x^2} ,\quad f,g\in \cH_\lambda.\]
\begin{proposition}
  \label{prop:XHorthog}
  Let $\lambda$ be an even partition. Then, the corresponding exceptional
  Hermite polynomials enjoy the following orthogonality property:
  \begin{equation}
    \label{eq:XHorthog}
    \la \Hlamof n, \Hlamof {n'}\ra_\lambda =
\sqrt{\pi}\, 2^{n-N} \frac{n!}{\pilamof{N}(n)}\,     \delta_{n,n'} ,\quad
    n,n'\in \Ilam,
  \end{equation}
  with $\pilamof{N}(n)$ as defined in \eqref{eq:pilam}.
\end{proposition}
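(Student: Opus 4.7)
The plan is to split into cases. For the off-diagonal case $n \ne n'$, I would use self-adjointness: by Corollary \ref{cor:xeigen} and \eqref{eq:taulamdef}, $\Hlamof{n}$ is an eigenfunction of $\taulam$ with eigenvalue $2(N-n)$, so distinct allowed degrees give distinct eigenvalues. Since $T_\lambda$ is self-adjoint by Proposition \ref{prop:TBA} (applied with $\eta = \etalam$), orthogonality in $\cH_\lambda$ follows at once.

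For the diagonal case $n = n'$, I would specialize the norm identity of the preceding subsection with the choices $l = \ell$, $m_i = \lambda_i + \ell - i$ for $i=1,\ldots,\ell$, and $m = n - N + \ell$. The definition of $\Ilam$ guarantees that $m, m_1, \ldots, m_\ell$ are pairwise distinct (this is precisely $\pilamof{\ell}(n) \ne 0$), so the identity applies. Comparing the resulting $\eta_\ell$ and $\xi_\ell$ with \eqref{eq:etalam} and \eqref{eq:Hlamn}, and accounting for the sign $(-1)^{\binom{\ell}{2}}$ from reversing the order of the first $\ell$ Hermite arguments in the Wronskians, the normalizing factors $\Clamof{\ell}$ and signs cancel cleanly in the ratio, leaving
\begin{equation*}
\frac{\xi_\ell}{\eta_\ell} = 2^\ell \pilamof{\ell}(n)\, \frac{\Hlamof{n}}{\etalam}.
\end{equation*}

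Next, I would integrate \eqref{eq:normid} over $\R$. The right-hand side $\int (\rho_\ell e^{-x^2})'\,dx$ vanishes: because $\lambda$ is even, the Krein-Adler theorem ensures $\etalam$ has no real zeros, so the left-hand side of \eqref{eq:normid} is smooth on $\R$; this forces $\rho_\ell e^{-x^2}$ to be $C^1$ on $\R$ (any apparent poles of $\rho_\ell$ at zeros of the intermediate $\eta_i$'s must cancel in this combination), and it vanishes at $\pm \infty$ because $\rho_\ell$ is of polynomial growth while $e^{-x^2}$ decays super-exponentially. Substituting $m - m_i = n - N - \lambda_i + i$, so that $\prod_{i=1}^\ell (m-m_i) = \pilamof{\ell}(n)$, and using classical Hermite orthogonality \eqref{eq:hortho} for $\int H_m^2 e^{-x^2}\,dx$, one obtains
\begin{equation*}
2^{2\ell}\pilamof{\ell}(n)^2 \Vert \Hlamof{n}\Vert^2_\lambda = 2^\ell \pilamof{\ell}(n)\, \sqrt{\pi}\, 2^{n-N+\ell}(n-N+\ell)!,
\end{equation*}
whence $\Vert \Hlamof{n}\Vert^2_\lambda = \sqrt{\pi}\, 2^{n-N}(n-N+\ell)!/\pilamof{\ell}(n)$. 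The elementary identity $\pilamof{N}(n) = \pilamof{\ell}(n)\cdot n!/(n-N+\ell)!$, immediate from \eqref{eq:pilam} and the convention $\lambda_i = 0$ for $i > \ell$, converts this to the stated form.

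The main obstacle I foresee is the analytic justification that the boundary integral $\int_\R (\rho_\ell e^{-x^2})'\,dx$ truly vanishes in the generic situation where the intermediate $\eta_i$'s have real zeros; the argument rests on exploiting smoothness of the left-hand side of \eqref{eq:normid} — a consequence of Krein-Adler applied \emph{only} to the final $\eta_\ell = \etalam$ — to conclude that the apparent singularities of $\rho_\ell$ cancel inside $\rho_\ell e^{-x^2}$. The remaining work is routine tracking of signs arising from Wronskian reorderings and of the $\Clamof{\ell}$ constants.
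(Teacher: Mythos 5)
Your proposal is correct and follows essentially the same route as the paper: orthogonality for $n\neq n'$ comes from the Sturm--Liouville structure, and the norm comes from integrating the identity \eqref{eq:normid} and invoking classical Hermite orthogonality \eqref{eq:hortho}. The only differences are in the bookkeeping --- you run the factorization chain with $l=\ell$ and $m=n-N+\ell$, which forces the extra (correct) identity $\pilamof{N}(n)=\pilamof{\ell}(n)\,n!/(n-N+\ell)!$, whereas the paper takes $l=N$, $m=n$ via Corollary \ref{cor:hratfunc} and \eqref{eq:Wrki}; moreover you are more explicit than the paper about why $\int_\R(\rho_l e^{-x^2})'=0$ despite the real zeros of the intermediate Wronskians (a point the paper passes over in silence, and which is cleanest to justify by noting that $\rho_l$ is rational and that a meromorphic function whose derivative is pole-free at a real point is itself pole-free there).
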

\begin{proof}
  Being the eigenfunctions of a SLP \eqref{eq:WetaSL}, the
  $\Hlamof{n},\; n\in \Ilam$ are orthogonal with respect to $\Wlam =
  W_{\etalam}$. It remains to establish the form of the norming
  constants in \eqref{eq:XHorthog}.
  By Corollary \ref{cor:hratfunc} and \eqref{eq:normid},
  \begin{align*}
    \la \Hlamof{n}, \Hlamof{n} \ra_\lambda
    &=    \int_\R
      \lp\frac{\Hlamof{n}(x)}{\etalam(x)}\rp^2 e^{-x^2} dx,\quad n\in \Ilam\\
    &=
      \int_\R
      \lp \frac{\WRof{H_{k_N},\ldots,    H_{k_1},H_{n}}}{
      2^N \pilamof{N}(n)\WRof{H_{k_N},\ldots, H_{k_1}} }  \rp^2
      e^{-x^2} dx,    \\
    &= \frac{1}{2^N \pilamof{N}(n)} \int_R H_n(x)^2 e^{-x^2} dx\\
    &= \sqrt{\pi} 2^{n-N}\frac{n!}{2^N \pilamof{N}(n)}  .
    \end{align*}
\end{proof}

\begin{theorem}
  \label{thm:main}
  Let $\lambda$ be an even partition. Then the SLP of form
  \eqref{eq:Weta} with $\eta = \etalam$ has eigenvalues
  \[ \eival_n = 2(n-N),\quad n\in \Ilam \]
  The corresponding eigenfunctions are $\Hlamof{n},\; n\in \Ilam$,
  while the norming constants are given by \eqref{eq:XHorthog}.
\end{theorem}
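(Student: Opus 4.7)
The eigenvalue identity and the orthogonality/norming constants are essentially already in hand: Corollary \ref{cor:xeigen} gives $\taulam \Hlamof n = 2(N-n) \Hlamof n$, which under the sign flip relating \eqref{eq:WRSL} to \eqref{eq:tauform} is precisely the SLP eigenvalue $\eival_n = 2(n-N)$, while Proposition \ref{prop:XHorthog} furnishes the explicit norming constants \eqref{eq:XHorthog}. What must still be proved is \emph{completeness}, namely that the point spectrum of $T_\lambda = T_{\etalam}$ is exhausted by $\{2(N-n) : n\in \Ilam\}$ with the $\Hlamof n$ as its (unique up to scalar) eigenfunctions.

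My plan is to establish completeness by backward induction along the factorization chain of Proposition \ref{prop:safachain}. Because $\lambda$ is even, each truncation $\lami{j}$ is also even (the pairing $\lambda_{2i-1}=\lambda_{2i}$ is preserved by $\lambda_i\mapsto\max(\lambda_i-j,0)$), so the Krein--Adler theorem guarantees $\eta_{\lami{j}}\in \cR$ and every $T_{\eta_{\lami{j}}}$ is a self-adjoint operator of the kind analyzed in Section~2. At the classical end $\tau_{\lambda_1}=\tauz$ the spectrum $\{-2n:n\in \Nz\}$ and its complete eigenbasis $\{H_n\}$ are standard. Traversing the chain from $j=\lambda_1$ down to $j=0$, Theorem \ref{thm:Tetxi}, applied to the pair $\eta=\eta_{\lami{j}},\;\xi=\eta_{\lami{j+1}}$ with $\eival_0=2\ell_j$ (the hypothesis \eqref{eq:chietaxi} being supplied by Proposition \ref{prop:safachain}), yields
\[
\sigma(T_{\eta_{\lami{j}}})=\bigl(\sigma(T_{\eta_{\lami{j+1}}})-2\bigr)\cup\{2\ell_j\},
\]
with $\eta_{\lami{j+1}}$ the new top eigenfunction of $T_{\eta_{\lami{j}}}$ and the remaining eigenfunctions obtained by applying $\beta_j$ to eigenfunctions of $T_{\eta_{\lami{j+1}}}$. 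After $\lambda_1$ iterations one arrives at $T_\lambda$ with an explicit spectrum and a complete orthogonal sequence of polynomial eigenfunctions; by uniqueness of polynomial solutions of $\taulam y=\eival y$ these must be scalar multiples of the $\Hlamof n$.

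The one real bookkeeping obstacle is matching the iteratively built spectrum to the desired set $\{2(N-n):n\in\Ilam\}$. Here I would use Proposition \ref{prop:lamj}, which gives $\ell_j=N-n_{j+1}+j$ in terms of the sporadic degrees. A new eigenvalue $2\ell_j$ injected at step $j$ is shifted down by $-2j$ through the remaining traversal of the chain, emerging in $\sigma(T_\lambda)$ as $2(\ell_j-j)=2(N-n_{j+1})$; as $j$ runs over $0,1,\dots,\lambda_1-1$ these contributions exhaust the sporadic portion of $\Ilam$. Meanwhile the classical eigenvalues $\{-2n\}_{n\in\Nz}$ accumulate a total shift of $-2\lambda_1$, becoming $\{2(N-n'):n'\ge N+\lambda_1\}$, which matches precisely the non-sporadic allowed degrees (those exceeding the top exceptional degree $k_1=\lambda_1+N-1$). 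The disjoint union of these two families is exactly $\{2(N-n):n\in\Ilam\}$, which is the required spectrum and completes the argument.
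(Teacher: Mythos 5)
Your proposal is correct and follows essentially the same route as the paper: eigenvalues from Corollary \ref{cor:xeigen}, norming constants from Proposition \ref{prop:XHorthog}, and completeness by descending the factorization chain of Proposition \ref{prop:safachain} using Theorem \ref{thm:Tetxi} together with the bookkeeping identity $\ell_j = N-n_{j+1}+j$ of Proposition \ref{prop:lamj}. If anything, your explicit tracking of the cumulative $-2\lambda_1$ shift on the classical spectrum is slightly more careful than the paper's displayed computation.
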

\begin{lemma}
  Let $\lambda$ be a partition and let $\lami{j},\; j\in \N$ be the
  partitions defined in \eqref{eq:lamjdef}.    If
  $\lambda$ is an even partition, then so is every $\lami{j}$.
\end{lemma}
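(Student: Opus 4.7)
The plan is to prove this directly from the definitions, since the statement is essentially a pointwise observation about the truncation operation $a \mapsto \max(a-j,0)$.

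First I would fix an arbitrary $j \in \Nz$ and an arbitrary $i \in \N$. By the hypothesis that $\lambda$ is even, we have $\lambda_{2i-1} = \lambda_{2i}$. Applying the definition \eqref{eq:lamjdef} coordinatewise gives
\[ \lami{j}_{2i-1} = \max(\lambda_{2i-1}-j,\,0) = \max(\lambda_{2i}-j,\,0) = \lami{j}_{2i}, \]
since $\max(\cdot -j,0)$ is a well-defined function of a single integer. Because this holds for every $i$, the partition $\lami{j}$ satisfies the evenness condition.

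The only auxiliary check is that $\lami{j}$ is genuinely a partition, i.e.\ that it is non-increasing and finitely supported. Monotonicity is preserved because $a \mapsto \max(a-j,0)$ is non-decreasing, and finite support of $\lami{j}$ follows from finite support of $\lambda$. There is no substantive obstacle here; the lemma is a one-line consequence of the fact that $\lami{j}$ is defined by applying the same scalar function to each component of $\lambda$, so any pairing identity among the components of $\lambda$ persists under this operation.
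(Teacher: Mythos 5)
Your proof is correct and is essentially identical to the paper's: both apply $\max(\cdot-j,0)$ componentwise and observe that the identity $\lambda_{2i-1}=\lambda_{2i}$ is preserved. Your extra remark that $\lami{j}$ remains a valid partition is a harmless addition the paper leaves implicit.
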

\begin{proof}
  Suppose that $\lambda_{2i-1} = \lambda_{2i}$ for
  all $i \in \N$. It follows that
  \[ \lami{j}_{2i-1} = \max(\lambda_{2i-1} - j,0) =
    \max(\lambda_{2i}-j,0) = \lami{j}_{2i}\,\quad i,j\in \N.\]
\end{proof}
\begin{proof}[of Theorem \ref{thm:main}]
  Multiplication of \eqref{eq:WetaSL} by $-\Wlam^{-1}$ and changing
  $\eival \to -\eival$, gives \eqref{eq:tauform}, which may be
  expressed as
  \[ \taulam y =  \eival y.\]
  By \eqref{eq:taudef} and  \eqref{eq:chieigen}, we have
  \[ \taulam \Hlamof{n} = 2(N-n) \Hlamof{n},\quad n\in \Ilam.\] Hence,
  $2(n-N)$ is an eigenvalue of \eqref{eq:Weta} with corresponding
  eigenfunction $\Hlamof{n},\;n\in \Ilam$.

  It remains to show that there are no either eigenvalues; i.e., that
  $\{\Hlamof{n} \}_{n\in \Ilam}$ is a complete orthogonal basis.  To
  show this, we employ the factorization chain described in Proposition
  \ref{prop:safachain}.  Let $\lami{j},\; j=0,\ldots, \lambda_1$ be
  the partitions defined in \eqref{eq:lamjdef}, and let
  $\eta_j = \eta_{\lami{j}}$.  By the above Lemma, these are all even
  partitions, and hence each $\eta_j \in \cR$.

  Let $T_j = T_{\eta_j},\; j=0,1,\ldots,\lambda_1$ be a sequence of
  self-adjoint operators with action $\tau_{\eta_j}$ and domains as
  per \eqref{eq:domTeta}.  By Proposition \ref{prop:TBA}, the formal
  factorization chain of Proposition \ref{prop:safachain} corresponds
  to a factorization chain of operators
  \begin{align*}
    T_j &= B_j A_j + 2\ell_j,\quad j=0,\ldots,\lambda_1-1,\\
    T_{j+1} &= A_j B_j + 2\ell_j+2,\quad j=0,\ldots,\lambda_1-1,\\
  \end{align*}
  where $\ell_j$ is the length of partition $\lami{j}$, and where
  $A_j, B_j$ are first order operators with action
  $\alpha_{\eta_{j+1},\eta_j}, \beta_{\eta_{j+1},\eta_j}$,
  respectively, and domains as given in \eqref{eq:domAdomB}.  Hence,
  $2\ell_j$ is the largest eigenvalue of
  $T_j,\; j=0,\ldots, \lambda_1$.  Hence, by Theorem \ref{thm:Tetxi},
  \[ \sigma(T_{j+1}-2) = \sigma(T_j)\setminus \{ 2\ell_j\} . \]
  It follows that
  \[ \sigma(T_0) = \{ 2\ell_j - 2 j \colon j=0,\ldots, \lambda_1-1\}
    \cup \sigma(T_{\lambda_1}).\]
  However, since $\tau_{\lambda_1} = \tauz$ is the classical Hermite
  operator, we have by \eqref{eq:elljnj} that
  \begin{align*}
    \sigma(T_0)
    &= \{ 2\ell_j - 2 j \colon j=0,\ldots, \lambda_1-1\}
      \cup (-2\Nz)\\
    &= \{ 2(N-n_{j+1}) \colon j=0,\ldots, \lambda_1-1     \}
      \cup (-2\Nz)\\
    &= \{ 2(N-n) \colon n\in \Ilam \}.
  \end{align*}
\end{proof}

\begin{corollary}
  Let $K = \{ k_1,\ldots, k_N \} \subset \Nz$ be a list of $N$
  non-negative integers such that
  \[ \sum_{i=1}^N k_i = \frac12 N(N+1).\] Set
  \[ \lambda_i = k_i+i-N,\; i\in \N.\] Then, 
  $\{ \Hlamof{n} \}_{n\in \Ilam}$ is a family of exceptional Hermite
  polynomials such that $K=\Klam$; i.e. the exceptional degrees are
  precisely $K$.  Moreover, if $\lambda$ is an  even partition, then
  the corresponding   $\{ \Hlamof{n} \}_{n\in \Ilam}$ are
  eigenfunctions of the SLP   \eqref{eq:WetaSL} whose spectrum, up to
  a shift with $N$, differs from the spectrum of the classical Hermite
  SLP by the removal of eigenvalues at position $2k,\; k\in K$.
\end{corollary}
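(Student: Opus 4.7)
The plan is to invert the map $\lambda \mapsto \Klam$ of \eqref{eq:kidef} and then invoke Theorem \ref{thm:main}. We may assume without loss of generality that the elements of $K$ are indexed in strictly decreasing order, $k_1 > k_2 > \cdots > k_N \geq 0$, since they are distinct non-negative integers. Set $\lambda_i = k_i + i - N$ for $i \leq N$ and $\lambda_i = 0$ for $i > N$.

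The first step is to verify that $\lambda$ is a partition of $N$. Non-increasingness is immediate: $\lambda_i - \lambda_{i+1} = (k_i - k_{i+1}) - 1 \geq 0$ since the $k_i$ are distinct integers in decreasing order. Non-negativity reduces to $\lambda_N = k_N \geq 0$. For the sum, I would use the hypothesis $\sum_i k_i = \tfrac12 N(N+1)$ to compute
\begin{equation*}
  \sum_{i=1}^N \lambda_i = \sum_{i=1}^N k_i + \sum_{i=1}^N (i-N) = \frac{N(N+1)}{2} - \frac{N(N-1)}{2} = N.
\end{equation*}
The equality $\Klam = K$ is then immediate from \eqref{eq:kidef}, since by construction $\lambda_i + N - i = k_i$. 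Proposition \ref{prop:degHlam} ensures that $\{\Hlamof{n}\}_{n \in \Ilam}$ is the polynomial family with degree sequence exactly $\Ilam = \Nz \setminus K$, establishing the first assertion.

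For the spectral claim, assume $\lambda$ is an even partition. Theorem \ref{thm:main} then gives the spectrum of the SLP \eqref{eq:WetaSL} with $\eta = \etalam$ as $\{2(n-N) : n \in \Ilam\}$. The classical Hermite SLP (corresponding to $\lambda = \emptyset$, $N = 0$) has spectrum $2\Nz$. Shifting the exceptional spectrum by $+2N$ yields $\{2n : n \in \Ilam\} = 2\Nz \setminus \{2k : k \in K\}$, which is precisely the assertion that the spectrum, up to the $2N$-shift, is obtained from the classical spectrum by removing the eigenvalues at positions $2k$ for $k \in K$.

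I do not anticipate any serious obstacle: the corollary is essentially a bookkeeping rearrangement of Theorem \ref{thm:main} combined with the observation that the map $\lambda \mapsto \Klam$ admits an explicit inverse. The only places where care is required are fixing the ordering convention on $K$ and the arithmetic verifying $|\lambda| = N$ from the hypothesis $\sum k_i = \tfrac12 N(N+1)$.
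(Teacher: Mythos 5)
Your proof is correct and follows the same route the paper intends: the corollary is stated without a separate proof precisely because it is the immediate bookkeeping consequence of Theorem \ref{thm:main} and the definition \eqref{eq:kidef} that you carry out, inverting $\lambda \mapsto \Klam$ and shifting the spectrum by $2N$. Your explicit verification that $\lambda$ is a partition of $N$ (including fixing the decreasing ordering of $K$ and checking $\sum_i \lambda_i = N$) makes precise what the paper leaves implicit.
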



\end{document}